\documentclass[a4paper,reqno,11pt]{amsart}
\usepackage{newtxtext}
\usepackage{microtype}
\usepackage{fix-cm}
\usepackage[T1]{fontenc}
\usepackage{natbib}
\usepackage{textcomp}
\usepackage{amsfonts} 
\usepackage{amssymb}
\usepackage{amsthm}
\usepackage{amsmath} 
\usepackage{mathrsfs} 
\usepackage{dsfont}
\usepackage{esint}
\usepackage{algorithm}
\usepackage{soul}
\usepackage{algpseudocode}
\usepackage[english]{babel} 
\usepackage{xcolor}
\definecolor{gray}{named}{gray}

\usepackage[left=3.5cm,right=3.5cm,top=3.5cm,bottom=3cm,headsep=0.7cm]{geometry}
\input{insbox}%%%%%%%%%%%%%% TeX macro,
\usepackage{tikz}
\usetikzlibrary{arrows}
 
\usepackage[scriptsize,bf]{caption}
\usepackage{enumerate}
\usepackage{enumitem}

\usepackage{hyperref}
\usepackage{cleveref}
\usepackage{mathtools}
\hypersetup{
    colorlinks,
    linkcolor={red!80!black},
    citecolor={blue!80!black}
}
\renewcommand{\div}{\mbox{\rm div}\;\!}
\theoremstyle{plain}
\begingroup
\theoremstyle{plain}
\newtheorem{theorem}{Theorem}[section]

\newtheorem{proposition}[theorem]{Proposition}

\newtheorem{lemma}[theorem]{Lemma}
\theoremstyle{definition}

\theoremstyle{remark}
\newtheorem{remark}[theorem]{Remark}

\endgroup

\theoremstyle{definition}
\theoremstyle{remark}

\numberwithin{equation}{section}

\makeatletter
\newcommand{\myitem}[1]{%
\item[#1]\protected@edef\@currentlabel{#1}%
}
\makeatother

\newcommand{\R}{\mathbb{R}}

\mathchardef\emptyset="001F

\author[C. Burtea]{Cosmin Burtea} 
\address[C. Burtea]{Université Paris Cité, Sorbonne Université, CNRS, IMJ-PRG, F-75013 Paris, France}
\email{cosmin.burtea@imj-prg.fr}

\author[T.~Crin-Barat]{Timoth\'ee~Crin-Barat$^*$} 
\address[T.~Crin-Barat]{Universit\'e Paul Sabatier,  Institut de Math\'ematiques de Toulouse, Route de Narbonne 118, 31062 Toulouse Cedex 9, France.}
\email{timothee.crin-barat@math.univ-toulouse.fr}

\author[P.~Gonin-{}-Joubert]{Pierre Gonin-{}-Joubert} 
\address[P.~Gonin-{}-Joubert]{Université Claude Bernard Lyon 1, Institut Camille Jordan, 21 Av. Claude Bernard, 69100 Villeurbanne, France.}
\email{goninjoubert@math.univ-lyon1.fr}

\keywords{Baer-Nunziato system, Pressure-relaxation, Kapila system, well-posedness.
\\\noindent$^*$Corresponding author: timothee.crin-barat@math.univ-toulouse.fr}
	
\title[]{On the relaxation towards mechanical equilibrium for two-pressure compressible flows}

\begin{document}

\begin{abstract}

We introduce a symmetrization of a one-velocity two-pressures Baer-Nunziato type model for mixtures of barotropic compressible fluids. It allows us to justify the zero compaction viscosity limit and to recover a solution of the so-called Kapila model.  On the other hand, the symmetrization highlights a pressure-induced stabilization mechanism which allows us to recover a global-in-time existence result for initial data close to constant states.
\end{abstract}
\maketitle

\section{Introduction}

\subsection{Introduction}

The Baer-Nunziato model \eqref{DBN} was introduced in \cite{Baer1986} to study deflagration to detonation transition in reactive granular media. In all generality it is a quasilinear system of PDEs of order one consisting of $5+2d$ equations which enjoy a rather non-standard mathematical structure: the equations are non-conservative and degenerate in the sense that the entropy naturally associated to the system is not strictly convex  \cite{Forestier2011,BurteaCrin-BaratTan2023,CordesseMassot2019}. In particular, the  Lax-Friedrichs theorem for symmetrization does not apply in this context. The \eqref{DBN} system of equations is obtained via volume averaging procedure of the single-phase balance equations \cite{truesdell1984thermodynamics,DrewPassman2006,IshiiTakashi2010} supplemented with a closure relation governing the evolution of the volume fraction in order to obtain a closed system. The resulting equations are non-conservative, due to interfacial exchanges between the two phases. It turns out that this model exhibits the appropriate algebraic structure governing nonhomogeneous continuous media interaction and is used to analyze multiphase flows, see for instance \cite{HerardSalehSeguin2018,Herard2005,CoquelHerardSalehSeguin2013} and the references cited within. 
\smallbreak
The study of the hyperbolicity of the \eqref{DBN} model was addressed in
\cite{Embid1992,Saurel1999,Saurel2003,gallouet2004numerical,Forestier2011}, symmetrization, which implies the local well-posedness of the model, at least in some regions of the phase space, was addressed in \cite{gallouet2004numerical,CoquelHerardSalehSeguin2013,SalehSeguin2020,HerardSalehSeguin2018,CordesseMassot2019}, while the Riemann problem was studied in \cite{Embid1992,Saurel1999,Saurel2003,gallouet2004numerical}. To our knowledge, the problem of symmetrizing a non-conservative system with degenerate entropy has no answer although a discussion is provided in \cite{CordesseMassot2019} for the case of non-conservative systems with strictly convex entropy.

In this kind of model, the two phases of the mixture are out of mechanical, thermal, and kinetic equilibrium, respectively. This leads to a system with stiff interaction terms and, for computational purposes, reduced models were derived to describe the flow in regions far from shock waves. This results in the so-called Kapila model, also known as the one-velocity, one-pressure two-temperatures model. 
\smallbreak
In this paper, we study from a mathematical point of view the one-dimensional ($d=1$) two-pressures one-velocity Baer-Nunziato model for barotropic compressible flows which reads
\begin{equation}
\left\{
\begin{array}
[c]{l}%
\partial_{t}\alpha_{\pm}+u\partial_x\alpha_{\pm}=\pm\dfrac{\alpha_{+}%
\alpha_{-}}{\mu}\left(  p_{+}\left(  \rho_{+}\right)  -p_{-}\left(
\rho_{-}\right)  \right)  ,\\
\partial_{t}\left(  \alpha_{\pm}\rho_{\pm}\right)  +\partial_x\left(
\alpha_{\pm}\rho_{\pm}u\right)  =0,\\
\partial_{t}(\rho u)+\partial_x(\rho u^2)+\partial_x p=0,
\end{array}
\right.  \tag{$BN$}\label{DBN}%
\end{equation}
where the characteristic state function of the two phases are denoted separately by $+$ and $-$, and the unknowns of the two phases are: 
\begin{itemize}
\item the densities: $\rho_{+},\rho_{-},$

\item the velocity of the mixture: $u\in \mathbb{R}$,

\item the volume fractions: $\alpha_{+}$ and $\alpha_{-}=1-\alpha_{+},$

\item the pressures: $p_{+}=p_{+}\left(  \rho_{+}\right)$ and $p_{-}=p_{-}\left(
\rho_{-}\right)  ,$ such that $p_{\pm}:\mathbb{R_+\rightarrow R_+}$ are given regular and
strictly increasing functions,

\item the mixture density $\rho=\alpha_{+}\rho_{+}+\alpha_{-}\rho_{-},$

\item the mixture pressure $p=\alpha_+ p_{+}\left(  \rho_{+}\right)  +\alpha_{-}p_{-}\left(  \rho
_{-}\right),$

\item the compaction viscosity $\mu>0$. 
\end{itemize}

Several quantities of interest do not appear explicitely in \eqref{DBN} but will be used in the following:

\begin{itemize}
\item the mass fractions: $:y_{\pm}=\frac{\alpha_{\pm}\rho_{\pm}%
}{\rho};$

\item the sound speeds: $c_{\pm}^{2}=p_{\pm}^{\prime}\left(
\rho_{\pm}\right)$,

\item the average sound speed: $\rho c^2=\alpha_+\rho_+ c_+^2+\alpha_-\rho_- c_-^2$.
\end{itemize}
\smallbreak
In \cite{Kapila2001}, a reduced model for System \eqref{DBN} was obtained by means of an informal asymptotic analysis in the zero compaction viscosity limit $\mu\to0$. Formally, when $\mu$ tends to $0$, solutions of System $\eqref{DBN}$ are expected to converge to solutions of the Kapila model:
\begin{equation}
\left\{
\begin{array}
[c]{l}%
\alpha_{+}+\alpha_{-}=1,\\
\partial_{t}\left(  \alpha_{\pm}\rho_{\pm}\right)  +\partial_x\left(
\alpha_{\pm}\rho_{\pm}u\right)  =0,\\
\partial_{t}(\rho u)+\partial_x(\rho u^2)+\partial_x p=0,\\
p=p_{+}\left(  \rho_{+}\right)  =p_{-}\left(  \rho_{-}\right).
\end{array}
\right.  \tag{$K$}\label{Kapila_chap_sym}%
\end{equation}

% In this paper we adress this problem from a mathematical point of view : we show that system \eqref{DBN}  admits classical solutions in Sobolev spaces on a time interval $T$ independent of
% $\mu$ along with uniform estimate. When $\mu\rightarrow0$ we recover a classical solution of the the Kapila model \eqref{K}. 

\subsection{Aims of the paper}
In this work, we revisit and refine the analysis introduced in \cite{BurteaCrin-BaratTan2023}, which was previously restricted to the power-law pressure laws
\(p_\pm(\rho_\pm)=A_\pm\rho_\pm^{\gamma_\pm}\).
We propose a symmetrization of System \eqref{DBN} that makes the pressure--dissipation structure of the system explicit.

This reformulation of the system allows us to establish two types of results. 
On the one hand, we prove the existence of classical solutions on a time interval that is uniform with respect to~$\mu$, and show that, in the limit $\mu \to 0$, these solutions converge to a classical solution of the so-called Kapila model. 
On the other hand, for fixed~$\mu$, we exploit the pressure--dissipation mechanism to establish the global-in-time existence of classical solutions to the system \eqref{DBN} for initial data sufficiently small and close to the pressure equilibrium.

%This structure enables us to justify the pressure-relaxation limit $\mu\to 0$ in a ill-prepared scenario. Moreover, within a perturbative framework, to show that the pressure-stabilization mechanism induces a stabilizing effect on the velocity and the density.

% \subsection{Aims of the paper}
% Our objective are twofold:
% \begin{itemize}
%     \item Local-in-time pressure-relaxation limit
%     \item Global-in-time existence by pressure-relaxation mechanism. (non-uniform in $\mu$).
% \end{itemize}
% To obtain both these resultn deriving suitable symmetrization, echoing the normal forms of Kawashima and Yong, is  crucial. This is the purpose of the next section.

\section{Reformulation of the system} \label{sec:sym}
\subsection{Symmetrization}

We first discuss the limiting case $\mu=0$ which corresponds to the Kapila System.
System \eqref{Kapila_chap_sym} can be symmetrized by observing that
\[
\alpha_{\pm}(D_{t}p_{\pm}+\rho_{\pm}p_{\pm}^{\prime}\partial_{x}u)+\rho_{\pm
}p_{\pm}^{\prime}D_{t}\alpha_{\pm}=0,
\]
which gives, after dividing by $\rho_{\pm}p_{\pm}^{\prime}$, summation and
taking into account that $D_{t}p_{+}=D_{t}p_{-}$,
\[
\left(  \frac{\alpha_{+}}{\rho_{+}p_{+}^{\prime}}+\frac{\alpha_{-}}{\rho
_{-}p_{-}^{\prime}}\right)  D_{t}p+\partial_{x}u=0.
\]
Above, and in the rest of this paper we use the notation
\begin{equation}
    \label{lag_deriv}
    D_t:=\partial_t+u\partial_x,
\end{equation}
for the material derivative.
Introducing the so-called \textit{Wood sound speed}
\[
\frac{1}{\rho c_{w}^{2}}=\frac{\alpha_{+}}{\rho_{+}p_{+}^{\prime}\left(
\rho_{+}\right)  }+\frac{\alpha_{-}}{\rho_{-}p_{-}^{\prime}\left(  \rho
_{-}\right)  }:=\frac{\alpha_{+}}{\rho_{+}c_{+}^{2}}+\frac{\alpha_{-}}%
{\rho_{-}c_{-}^{2}}%
\]
we obtain%
\[
D_{t}p+\rho c_{w}^{2}\partial_{x}u=0.
\]
Gathering the equations of the new set of variables, we obtain the \textit{Wood sound speed} symmetrization, see \cite{Gavrilyuk2011}, of the Kapila model \eqref{Kapila_sym}:
\begin{equation}
\left\{
\begin{array}
[c]{l}%
D_{t}y=0,\\
D_{t}p+\rho c_{w}^{2}\partial_{x}u=0,\\
D_{t}u+\frac{1}{\rho}\partial_{x}p=0,
\end{array}
\right.  \label{Kapila_sym}%
\end{equation}
where $y:=y_+$.
\medbreak
In order to symmetrize \eqref{DBN} we follow
two guiding principles:

\begin{itemize}
    \item  First, we would like to find a symmetrization that degenerates
into \eqref{Kapila_sym} when $\mu\rightarrow
0.$ The new set of variables is therefore sought in the form $V=\left(
y,{\pi},\delta {\pi},u\right)  $ where
\[%
\begin{array}
[c]{l}
p={\pi}+\delta {\pi}
\\
{\pi}={\pi}\left(  \alpha_{+},\rho_{+},\rho_{-}\right)  ,\\
\delta {\pi}=\delta {\pi}\left(  \alpha_{+},\rho_{+},\rho_{-}\right)
=R\left(  \alpha_{+},\rho_{+},\rho_{-}\right)  \left(  p_{+}\left(  \rho
_{+}\right)  -p_{-}\left(  \rho_{-}\right)  \right) .
\end{array}
\]
\item Secondly, in the equation of ${\pi}$,
the terms of order zero resulting from the change of variables should be at
least proportional to $\frac{1}{\mu}(\delta {\pi})^{2}$.

\end{itemize}

We claim that the two-pressures-one-velocity \eqref{DBN} model can be put under the form
\begin{equation}
\left\{
\begin{array}
[c]{l}%
D_{t}y=0,\\
D_{t}\delta {\pi}+\left(  \rho c^{2}-\rho c_{w}^{2}+f_{11}\delta
{\pi}\right)  \partial_x u+\left(  \dfrac{\rho
_{+}c_{+}^{2}}{\alpha_{+}}+\dfrac{\rho_{-}c_{-}^{2}}{\alpha_{-}}+f_{12}\delta
{\pi}\right)  \dfrac{\alpha_{+}\alpha_{-}\delta {\pi}}{\mu}=0,\\
D_{t}{\pi}+(\rho c_{w}^{2}+f_{21}\delta {\pi})\partial_x u+f_{22}\dfrac{\left(  \delta {\pi}\right)  ^{2}}{\mu}=0,\\
D_{t}u+\frac{1}{\rho}\partial_x {\pi}+\frac{1}{\rho}\partial_x\delta {\pi}=0.
\end{array}
\right.  \label{form}%
\end{equation}
In \eqref{form}, ${\pi}$ and $\delta {\pi}$ are such that $\delta
{\pi}$ is proportional to the pressure difference $p_{+}-p_{-}$ while
${\pi}+\delta {\pi}=p$. The terms $f_{ij}$ are functions of the unknowns which are independent of $\mu$. Therefore, system \eqref{form} is symmetrizable
with the help of a diagonal matrix: the diagonal terms are strictly positive
as soon as the pressure difference is sufficiently small (the smallness will
not be linked to $\mu$) and $\rho_+c_+^2\neq \rho_- c_-^2$. The algebraic structure of the system (symmetry and
also the particular structure of the terms of order zero) allows us to obtain estimates in Sobolev spaces which are
uniform w.r.t. the parameter $\mu$. This allows us to justify that classical solutions of
\eqref{DBN} tend to classical solutions of \eqref{Kapila_chap_sym}. The precise
mathematical results are stated below in Section \ref{sec:mainresults}. 
\smallbreak
There are at least two ways to reformulate \eqref{DBN} into \eqref{form}: the first uses an idea
introduced in \cite{BurteaCrin-BaratTan2023} and the second one, up to our knowledge,
seems to be new. The latter has the advantage of having $f_{22}=0$ which
allows to obtain a better result in terms of quantifying the error between the
two systems. 

\subsubsection{A first symetrisation}

\noindent\newline

The first approach adopted in \cite{BurteaCrin-BaratTan2023} can be summarized as follows.
We write the equation satisfied by the average pressure
\begin{equation}
D_{t}p+\left(  \alpha_{+}\rho_{+}c_{+}^{2}+\alpha_{-}\rho_{-}c_{-}^{2}\right)
\partial_{x}u+\left(  \rho_{+}c_{+}^{2}-\rho_{-}c_{-}^{2}\right)  \frac
{\alpha_{+}\alpha_{-}}{\mu}\delta p =\frac{\alpha_{+}\alpha_{-}}{\mu}(\delta
p)^{2}\label{ecuatia_p_manip}%
\end{equation}
and we compare it with the equation verified by the pressure difference
$\delta p=p_{+}-p_{-}$:
\begin{equation}
D_{t}\delta p+(\rho_{+}c_{+}^{2}-\rho_{-}c_{-}^{2})\partial_{x}u+\left(
\frac{\rho_{+}c_{+}^{2}}{\alpha_{+}}+\frac{\rho_{-}c_{-}^{2}}{\alpha_{-}%
}\right)  \frac{\alpha_{+}\alpha_{-}}{\mu}\delta
p=0.\label{ecuatia_deltap_manip}%
\end{equation}
Note that the variable $p$ is not a good choice of unknown since its equation contains terms that are linear in~$\delta p/\mu$.
Thus, we search for $R=R\left(  \alpha_{+},\rho_{+},\rho_{-}\right)$ such that
$p-R\delta p$ contains terms that are quadratic with respect to $\delta
p/\sqrt{\mu}$. A natural choice of variables is
\begin{equation}
\left\{
\begin{array}
[c]{l}%
\delta {\pi}:=\frac{\left(  \rho_{+}c_{+}^{2}-\rho_{-}c_{-}^{2}\right)
}{\left(  \frac{\rho_{+}c_{+}^{2}}{\alpha_{+}}+\frac{\rho_{-}c_{-}^{2}}%
{\alpha_{-}}\right)  }\delta p,\\
{\pi}:=p-\frac{\left(  \rho_{+}c_{+}^{2}-\rho_{-}c_{-}^{2}\right)  }{\left(
\frac{\rho_{+}c_{+}^{2}}{\alpha_{+}}+\frac{\rho_{-}c_{-}^{2}}{\alpha_{-}%
}\right)  }\delta p=\frac{\frac{\rho_{-}c_{-}^{2}}{\alpha_{-}}}{\frac{\rho
_{+}c_{+}^{2}}{\alpha_{+}}+\frac{\rho_{-}c_{-}^{2}}{\alpha_{-}}}p_{+}%
+\frac{\frac{\rho_{+}c_{+}^{2}}{\alpha_{-}}}{\frac{\rho_{+}c_{+}^{2}}%
{\alpha_{+}}+\frac{\rho_{-}c_{-}^{2}}{\alpha_{-}}}p_{-}.
\end{array}
\right.  \label{variable_new_old}%
\end{equation}
A lengthy yet straightforward computation, that resembles what was
presented in \cite{BurteaCrin-BaratTan2023}, shows that the equations verified by
$\left(  {\pi},\delta {\pi}, y,u\right)$, are of the form $\left(  \text{\ref{form}}\right)$.
In fact, it turns out that some simplifications can be made in the case of Gamma-law pressures $p_\pm(\rho_\pm)=c_\pm\rho_\pm^{\gamma_\pm}$ as we have
\[
\rho_{\pm}c_{\pm}^{2}=\rho_{\pm}p_{\pm}^{\prime}=\gamma_{\pm}p_{\pm}.
\]
In this case, if $\gamma_+\ne\gamma_-$, we can write the term proportional with $\dfrac{\delta p}{\mu}$
in equation $\left(  \text{\ref{ecuatia_p_manip}}\right)  $ as
\begin{align*}
\left(  \rho_{+}c_{+}^{2}-\rho_{-}c_{-}^{2}\right)  \frac{\alpha_{+}\alpha
_{-}}{\mu}\delta p  & =\left(  \gamma_{+}p_{+}-\gamma_{-}p_{-}\right)
\frac{\alpha_{+}\alpha_{-}}{\mu}\delta p\\
& =(\gamma_{+}-\gamma_{-})p\frac{\alpha_{+}\alpha_{-}}{\mu}\delta p+\left(
\gamma_{+}\alpha_{-}-\gamma_{-}\alpha_{+}\right)  \frac{\alpha_{+}\alpha_{-}%
}{\mu}(\delta p)^{2}%
\end{align*}
and we have
\[
\left(  \frac{\rho_{+}c_{+}^{2}}{\alpha_{+}}+\frac{\rho_{-}c_{-}^{2}}%
{\alpha_{-}}\right)  \frac{\alpha_{+}\alpha_{-}}{\mu}\delta p=\left(
\frac{\gamma_{+}}{\alpha_{+}}+\frac{\gamma_{-}}{\alpha_{-}}\right)
p\frac{\alpha_{+}\alpha_{-}}{\mu}\delta p+\left(
\gamma_{+}\alpha_{-}^2-\gamma_{-}\alpha_{+}^2\right)
\frac{(\delta p)^2}{\mu}.
\] This explains the set of variables which was used in
\cite{BurteaCrin-BaratTan2023} which is
\[
p-\dfrac{(\gamma_{+}-\gamma_{-})}{\dfrac{\gamma_{+}}{\alpha_{+}}+\dfrac{\gamma_{-}}{\alpha_{-}}}\delta p:=\dfrac{\dfrac{\gamma_{-}}{\alpha_-}p_++\dfrac{\gamma_{+}}{\alpha_+}p_-}{\dfrac{\gamma_{+}}{\alpha_{+}}+\dfrac{\gamma_{-}}{\alpha_{-}}}.
\]
This change of variable allows us to justify the limit $\mu\rightarrow 0$ and to obtain a convergence rate for a class of well-prepared initial data.

\bigskip

\subsubsection{A second symmetrization}

\noindent \newline

Another possibility is to search for the effective pressure in the particular
form ${\pi}={\pi}\left(  \rho,y\right)  $. This can be done by defining
the function ${\pi}$ implicitly from the relation:
\begin{equation}
\frac{1}{\rho}=\frac{y}{p_{+}^{-1}\left(  {\pi}\right)  }+\frac{1-y}%
{p_{-}^{-1}\left(  {\pi}\right)  }.\label{relatie}%
\end{equation}
The existence of $\pi=\pi(\rho,y)$ is a consequence of the strict monotonicity of $p_\pm$ which guarantees that given any $(\rho,y)$ there exists an unique $\pi$ verifying \eqref{relatie} while its regularity follows from the implicit function theorem. We introduce the functions $\left[  \rho_{\pm}^{2}c_{\pm}^{2}\right] : (0,\infty)\rightarrow(0,\infty),\left[
\rho^{2}c_{w}^{2}\right]  :\left(  0,1\right)  \times\left(  0,\infty\right)
\rightarrow\left(  0,\infty\right)  $ defined by%
\[
\left\{
\begin{array}
[c]{l}%
\left[  \rho_{\pm}^{2}c_{\pm}^{2}\right]  \left(  s\right)  =(p_{\pm}%
^{-1})^{2}\left(  s\right)  (p_{\pm})^{\prime}\left(  p_{\pm}^{-1}\left(s\right)\right),
\\
\dfrac{1}{\left[  \rho^{2}c_{w}^{2}\right]  \left(  y,s\right)  }=\dfrac
{y}{\left[  \rho_{+}^{2}c_{+}^{2}\right]  \left(  s\right)  }+\dfrac
{1-y}{\left[  \rho_{-}^{2}c_{-}^{2}\right]  \left(  s\right)  }.
\end{array}
\right.
\]
Observe that if $\rho_{+},\rho_{-}\in\left(  0,\infty\right)  $
are such that
\[
p_{+}\left(  \rho_{+}\right)  =p_{-}\left(  \rho_{-}\right)  =p
\]
then
\begin{equation}
\left\{
\begin{array}
[c]{l}%
\left[  \rho_{\pm}^{2}c_{\pm}^{2}\right]  \left(  p\right)  =\rho_{\pm}%
^{2}c_{\pm}^{2},\\
\dfrac{1}{\left[  \rho^{2}c_{w}^{2}\right]  \left(  y,p\right)  }=\dfrac
{y}{\rho_{+}^{2}p_{+}^{\prime}(\rho_{+})}+\dfrac{1-y}{\rho_{-}^{2}%
p_{-}^{\prime}\left(  \rho_{-}\right)  }.
\end{array}
\right.
\label{functii_ciudate}
\end{equation}
Consider the application
\[
\left(  \alpha_{+},\rho_{+},\rho_{-}\right)  \rightarrow\left(  y,\alpha
_{+}p_{+}\left(  \rho_{+}\right)  +\alpha_{-}p_{-}(\rho_{-}),{\pi}\left(
\rho,y\right)  \right)
\]
where $\rho$ and $y$ are seen as functions of $\left(  \alpha
_{+},\rho_{+},\rho_{-}\right)  $ via 
\[
\rho=\alpha_{+}\rho_{+}+(1-\alpha_{+})\rho_{-},\text{ }y=\frac{\alpha_{+}\rho_{+}%
}{\alpha_{+}\rho_{+}+(1-\alpha_{+})\rho_{-}}.
\]
The differential of this transformation computed at a point $\left(
\alpha_{+},\rho_{+},\rho_{-}\right)  $ where $p_{+}\left(  \rho_{+}\right)
=p_{-}\left(  \rho_{-}\right)  $ is
\[%
\begin{pmatrix}
\frac{\rho_{+}\rho_{-}}{\rho^{2}} & \frac{\alpha_{+}\alpha_{-}\rho_{-}}%
{\rho^{2}} & -\frac{\alpha_{+}\alpha_{-}\rho_{+}}{\rho^{2}}\\
0 & \alpha_{+}p_{+}^{{\prime}}\left(  \rho_{+}\right)   & \alpha_{-}%
p_{-}^{\prime}\left(  \rho_{-}\right)  \\
0 & \frac{\frac{\alpha_{+}}{\rho_{+}}}{\frac{\alpha_{+}}{\rho_{+}%
p_{+}^{{\prime}}\left(  \rho_{+}\right)  }+\frac{\alpha_{-}}{\rho_{-}%
p_{-}^{\prime}\left(  \rho_{-}\right)  }} & \frac{\frac{\alpha_{-}}{\rho_{-}}%
}{\frac{\alpha_{+}}{\rho_{+}p_{+}^{^{\prime}}\left(  \rho_{+}\right)  }%
+\frac{\alpha_{-}}{\rho_{-}p_{-}^{\prime}\left(  \rho_{-}\right)  }}%
\end{pmatrix}
\]
and the Jacobian is given by
\begin{equation}
J\left(  \alpha_{+},\rho_{+},\rho_{-}\right)  =\frac{\rho_{-}p_{-}^{\prime
}\left(  \rho_{-}\right)  \rho_{+}p_{+}^{{\prime}}\left(  \rho_{+}\right)
}{\alpha_{+}\rho_{-}p_{-}^{\prime}\left(  \rho_{-}\right)  +\alpha_{-}\rho
_{+}p_{+}^{{\prime}}\left(  \rho_{+}\right)  }\frac{\alpha_{+}\alpha_{-}%
}{\rho^{2}}\left(  \rho_{+}p_{+}^{{\prime}}\left(  \rho_{+}\right)  -\rho
_{-}p_{-}^{\prime}\left(  \rho_{-}\right)  \right)  .\label{jacobian_of}%
\end{equation}
Consider the change of variable
\begin{equation}
\varphi:\left\{  \left(  \alpha,p,\delta p\right)  \in\left(  0,1\right)
\times\left(  0,\infty\right)  \times\mathbb{R}:\dfrac{p}{\alpha}>\delta p>-\dfrac{p}{1-\alpha}\right\}
\rightarrow\mathbb{R}^{3}\label{def_varphi}%
\end{equation}
given by
\begin{equation}
\left\{
\begin{array}
[c]{l}%
\varphi_{1}\left(  \alpha_{+},p,\delta p\right)  =\dfrac{\alpha_{+}%
p_{+}^{-1}\left(  p_{+}\right)  }{\alpha_{+}p_{+}^{-1}\left(  p_{+}\right)
+\alpha_{-}p_{-}^{-1}\left(  p_{-}\right)  },\\
\varphi_{2}\left(  \alpha_{+},p,\delta p\right)  =p,\\
\varphi_{3}\left(  \alpha_{+},p,\delta p\right)  =p
-  {\pi}  \left(  \dfrac{\alpha_{+}p_{+}%
^{-1}\left(  p_{+}\right)  }{\alpha_{+}p_{+}^{-1}\left(  p_{+}\right)
+\alpha_{-}p_{-}^{-1}\left(  p_{-}\right)  },\alpha_{+}p_{+}^{-1}\left(
p_{+}\right)  +\alpha_{-}p_{-}^{-1}\left(  p_{-}\right)  \right)
\end{array}
\right.  \label{change_of_var}%
\end{equation}
with%
\[
\alpha_-=1-\alpha_+,\text{   }p_{\pm}=p\pm\alpha_\mp\delta p.
\]
Let
\[
\psi:\left(  0,1\right)  \times\left(  0,\infty\right)  \mathcal{\rightarrow
}\left(  0,1\right)  \times\left(  0,\infty\right)  :\psi\left(  \alpha
_{+},p\right)  =\left(  \varphi_{1}\left(  \alpha_{+},p,0\right)  ,\varphi
_{2}\left(  \alpha_{+},p,0\right)  \right).
\]
The function $\psi$ is a $C^{\infty}$-diffeomorphism which can be computed
explicitly. Using the chain rule and $\left(  \text{\ref{jacobian_of}%
}\right)  $, we see that the Jacobian of $\varphi$ is different from
zero at $\left(  \alpha,p,0\right)$ for
\[
\left(  \alpha,p\right)  \in\mathcal{O}:=\left\{  \left(  \alpha,s\right)
\in\left(  0,1\right)  \times\left(  0,\infty\right)  :p_{+}^{-1}%
(s)p_{+}^{\prime}\left(  p_{+}^{-1}(s)\right)  \not =p_{-}^{-1}(s)p_{-}%
^{\prime}\left(  p_{-}^{-1}(s)\right)  \right\}  .
\]
The following proposition shows that the change of
variable $\left(  \text{\ref{change_of_var}}\right)  $ is reversible on a
sufficiently small cylindrical neighborhood of $p_{+}=p_{-}$.

\begin{proposition}
Let $\mathcal{U}$ an open bounded convex set such that ${\overline
{\mathcal{U}}}\subset\mathcal{O}$ and $\varphi$ as defined in \eqref{def_varphi}-\eqref{change_of_var}. There exists some $\eta>0$ such that
the restriction of $\varphi$ to
${\mathcal{U}}\times\left(  -\eta,\eta\right)  $ is a $C^{\infty}$-diffeomorphism onto its image.
\end{proposition}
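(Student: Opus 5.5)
The plan is the standard "local diffeomorphism near a compact set plus injectivity" argument, made uniform over $\overline{\mathcal U}$ by compactness.

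\emph{Step 1: regularity and invertibility of the differential on the slice.} I would first check that $\varphi$ is $C^\infty$ on its domain. The maps $p_\pm^{-1}$ are smooth, since $p_\pm$ are smooth and strictly increasing; I would assume $p_\pm'>0$, as is implicit in the paper. The function $\pi$ is smooth by the implicit function theorem. The constraint $p/\alpha>\delta p>-p/(1-\alpha)$ is exactly $p_\pm>0$.

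Next I would show that $D\varphi(\alpha,p,0)$ is invertible for every $(\alpha,p)\in\overline{\mathcal U}\subset\mathcal O$. The key observation is that $\varphi$ is the composition of $(\alpha_+,p,\delta p)\mapsto(\alpha_+,\rho_+,\rho_-)$, with $\rho_\pm=p_\pm^{-1}(p\pm\alpha_\mp\delta p)$, and the map $(\alpha_+,\rho_+,\rho_-)\mapsto (y,\ \alpha_+p_++\alpha_-p_-,\ \pi(\rho,y))$, followed by $(y,P,\Pi)\mapsto(y,P,P-\Pi)$. The last map has determinant $-1$. The first map has Jacobian determinant
\[
(\alpha_-+\alpha_+)\big/\big(p_+'(\rho_+)\,p_-'(\rho_-)\big)\neq0 .
\]
The middle map has Jacobian \eqref{jacobian_of} at points with $p_+(\rho_+)=p_-(\rho_-)$, which is exactly the case when $\delta p=0$. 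That Jacobian is nonzero precisely when $\rho_+p_+'(\rho_+)\ne\rho_-p_-'(\rho_-)$, i.e.\ when $(\alpha,p)\in\mathcal O$.

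\emph{Step 2: local diffeomorphism on a uniform strip.} By continuity of $\det D\varphi$ and compactness of $\overline{\mathcal U}\times\{0\}$, there is $\eta_0>0$ such that $\overline{\mathcal U}\times[-\eta_0,\eta_0]$ lies in the domain of $\varphi$ and $\det D\varphi\neq0$ there. Staying in the domain is possible because $p$ is bounded below by a positive constant on $\overline{\mathcal U}$. The inverse function theorem then makes $\varphi$ a local $C^\infty$ diffeomorphism on ${\mathcal U}\times(-\eta,\eta)$ for any $\eta\le\eta_0$. What remains is global injectivity, since an injective local diffeomorphism is a diffeomorphism onto its (open) image.

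\emph{Step 3: injectivity (the main obstacle).} I would first show that $\psi=\varphi(\cdot,\cdot,0)$ is injective on $\overline{\mathcal U}$. Its second component is $p$, so I only need injectivity in $\alpha$ at fixed $p$. For fixed $p$, $\varphi_1(\alpha,p,0)=\alpha\rho_+/(\alpha\rho_++(1-\alpha)\rho_-)$ with $\rho_\pm=p_\pm^{-1}(p)$ fixed, and this is strictly increasing in $\alpha$. Moreover the third component vanishes at $\delta p=0$, since then $\pi(\rho,y)=p$ by \eqref{relatie}. Hence $\varphi(\cdot,\cdot,0)$ is injective on $\overline{\mathcal U}$.

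I would then argue by contradiction. Suppose no $\eta$ works. Then there are pairs $x_n\neq x_n'$ in $\mathcal U\times(-1/n,1/n)$ with $\varphi(x_n)=\varphi(x_n')$. By compactness, along a subsequence $x_n\to(a,0)$ and $x_n'\to(a',0)$ with $a,a'\in\overline{\mathcal U}$. Continuity gives $\varphi(a,0)=\varphi(a',0)$, so $a=a'$ by injectivity on the slice. But $\varphi$ is injective on a neighbourhood of $(a,0)$ by the inverse function theorem; Step 1 covers boundary points of $\mathcal U$ as well, since $\overline{\mathcal U}\subset\mathcal O$. For large $n$ both $x_n$ and $x_n'$ lie in that neighbourhood, contradicting $x_n\neq x_n'$.

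Convexity of $\mathcal U$ is not needed in this route. It could alternatively be used for a mean-value type argument, but the compactness argument suffices.
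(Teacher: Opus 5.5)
Your proposal is correct and follows the route the paper indicates just before the statement: the Jacobian of $\varphi$ is nonzero on the slice $\{\delta p=0\}$ by the chain rule and \eqref{jacobian_of}, and $\psi$ is injective, which you verify through the monotonicity of $\varphi_1(\cdot,p,0)$ in $\alpha$. The paper never writes out the passage from the slice to a uniform cylinder, and your compactness/contradiction argument supplies it cleanly; as you note, it does not use the convexity of $\mathcal U$, which the paper presumably included for a mean-value-type injectivity estimate.
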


\bigskip

In order to obtain an equation for $\pi$, differentiating
$\left(  \text{\ref{relatie}}\right)$, we obtain
\begin{align*}
\frac{\partial_{x}u}{\rho} &  =-\left(\frac{y_{+}}{(p_{+}^{-1})^{2}\left(
{\pi}\right)  }(p_{+}^{-1})^{\prime}\left(  {\pi}\right)  +\frac{y_{-}%
}{(p_{-}^{-1})^{2}\left(  {\pi}\right)  }(p_{-}^{-1})^{\prime}\left(
{\pi}\right)  \right)D_{t}{\pi}\\
&  =-\left(\frac{y_{+}}{(p_{+}^{-1})^{2}\left(  {\pi}\right)  }\frac{1}%
{(p_{+})^{\prime}\left(  p_{+}^{-1}({\pi}\right)  )}+\frac{y_{-}}%
{(p_{-}^{-1})^{2}\left(  {\pi}\right)  }\frac{1}{(p_{-})^{\prime}\left(
p_{-}^{-1}({\pi}\right)  )}\right)D_{t}{\pi}.
\end{align*}
This leads to
\[
D_{t}{\pi}+\frac{1}{\rho}\left[  \rho^{2}c_{w}^{2}\right]  \left(
y,{\pi}\right)  \partial_{x}u=0.
\]
Let the other variable be
\[
\delta \pi:=p-{\pi}.
\]
The fact that $\delta {\pi}$ behaves as $\delta p=p_{+}-p_{-}$ can be
obtained by comparing $\left(  \text{\ref{relatie}}\right)  $ with%
\[
\frac{1}{\rho}=\frac{y_{+}}{\rho_{+}}+\frac{y_{-}}{\rho_{-}}=\frac{y_{+}%
}{p_{+}^{-1}(p_{+})}+\frac{y_{-}}{p_{-}^{-1}(p_{-})}.
\]
Using \eqref{functii_ciudate}, this gives%
\[
y_{+}\int_{{\pi}}^{p_{+}}\frac{ds}{\left[  \rho_{+}^{2}c_{+}^{2}\right]
\left(  s\right)  }+y_{-}\int_{{\pi}}^{p_{-}}\frac{ds}{\left[  \rho_{-}%
^{2}c_{-}^{2}\right]  \left(  s\right)  }=0.
\]
We deduce that%
\begin{align*}
& y_{+}\left(  p_{+}-p\right)  \int_{0}^{1}\frac{d\theta}{\left[  \rho_{+}%
^{2}c_{+}^{2}\right]  \left(  (1-\theta)p+\theta p_{+}\right)  }+y_{-}\left(
p_{-}-p\right)  \int_{0}^{1}\frac{d\theta}{\left[  \rho_{-}^{2}c_{-}%
^{2}\right]  \left(  (1-\theta)p+\theta p_{-}\right)  }\\
& =y_{+}\left(  {\pi}-p\right)  \int_{0}^{1}\frac{d\theta}{\left[  \rho
_{+}^{2}c_{+}^{2}\right]  \left(  (1-\theta)p+\theta {\pi}\right)  }%
+y_{-}\left(  {\pi}-p\right)  \int_{0}^{1}\frac{d\theta}{\left[  \rho
_{-}^{2}c_{-}^{2}\right]  \left(  (1-\theta)p+\theta {\pi}\right)  }.
\end{align*}
We obtain
\begin{align}
& \left( \rho_+ \int_{0}^{1}\frac{d\theta}{\left[  \rho_{+}^{2}c_{+}^{2}\right]
\left(  (1-\theta)p+\theta p_{+}\right)  }-\rho_-\int_{0}^{1}\frac{d\theta}{\left[
\rho_{-}^{2}c_{-}^{2}\right]  \left(  (1-\theta)p+\theta p_{-}\right)
}\right)  \frac{\alpha_{+}\alpha_{-}\delta p}{\rho}\nonumber\\
& =\left(  y_{+}\int_{0}^{1}\frac{d\theta}{\left[  \rho_{+}^{2}c_{+}%
^{2}\right]  \left(  (1-\theta)p+\theta {\pi}\right)  }+y_{-}\int_{0}%
^{1}\frac{d\theta}{\left[  \rho_{-}^{2}c_{-}^{2}\right]  \left(
(1-\theta)p+\theta {\pi}\right)  }\right)  \left(  {\pi}-p\right)
.\label{link_deltapeff_deltap}%
\end{align}
A second-order expansion of the left-hand side of the above equation reads

\begin{equation}
\mathrm{LHS}_{\text{\eqref{link_deltapeff_deltap}}}=\left(  \frac{1}{\rho
_{+}c_{+}^{2}}-\frac{1}{\rho_{-}c_{-}^{2}}\right)  \frac{\alpha
_{+}\alpha_{-}\delta p}{\rho}+O\left(  (\delta p)^{2}\right)  .
\label{link2}
\end{equation}
Gathering the equations we obtain%
\begin{align} \label{eq:sym1}
\left\{
\begin{array}
[c]{l}%
D_{t}y_{+}=0,\\
D_{t}{\pi}+\frac{1}{\rho}\left[  \rho^{2}c_{w}^{2}\right]  \left(
y_{+},{\pi}\right)  \partial_{x}u=0,\\
D_{t}\left(  p-{\pi}\right)  +\dfrac{1}{\rho}\left(  y_{+}\rho^{2}c_{+}%
^{2}+y_{-}\rho^{2}c_{-}^{2}-\left[  \rho^{2}c_{w}^{2}\right]  \right)
\partial_{x}u\\
\hspace{2.4cm}+\left(  \rho
_{+}c_{+}^{2}-\rho_{-}c_{-}^{2}\right)  \frac{\alpha_{+}\alpha_{-}}{\mu}\delta
p=\dfrac{\alpha_{+}\alpha_{-}}{\mu}(\delta
p)^2,\\
D_{t}u+\dfrac{1}{\rho}\partial_{x}p=0.
\end{array}
\right.
\end{align}
Using the equality \eqref{link_deltapeff_deltap} with \eqref{link2}, we can rewrite the last two terms appearing in the third equation of \eqref{eq:sym1} as a term proportional to
$\dfrac{(\delta {\pi})^{2}}{\mu}$ thus achieving the form announced in
$\left(  \text{\ref{form}}\right)  $.

\section{Main results} \label{sec:mainresults}
\subsection{Uniform-in-$\mu$ local-in-time well-posedness and pressure-relaxation limit}
% Let $\mathcal{O}$ be the constrained state space
% \[
% \mathcal{O}=\left\{  \alpha_{+}\in\left(  0,1\right),\:s>0,\:p_{+}^{-1}%
% (s)p_{+}^{\prime}\left(  p_{+}^{-1}(s)\right)  \not =p_{-}^{-1}(s)p_{-}%
% ^{\prime}\left(  p_{-}^{-1}(s)\right)  \right\}  \subset\mathbb{R}^{2}.%
% \]
\begin{theorem}\label{thm:LWP}
Let $\mu>0$. Let $K\subset\mathcal{O}$ a compact convex set, $\left(  \bar\alpha_+%
,\overline{p}\right)  \in\mathcal{O}$, $\bar\alpha_{-}=1-\bar
{\alpha}_{+}$ and, for $s>3/2$,
\[
\left(  \alpha_{+,0},p_{0},u_{0}\right)  \in\left(  \bar\alpha_+%
,\overline{p},0\right)  +\left(  H^{s}\left(  \mathbb{R}\right)  \right)
^{3}%
\]
such that $\left(  \alpha_{+,0},p_{0}\right)  \in K$. There exists $\eta=\eta\eqref{Kapila_chap_sym}>0$ such that, for any $\delta p_{0}\in H^{s}(\mathbb{R})$, if
\[
\|\delta p_{0}\|_{H^{s}} \leq \eta,
\]
then there exists $T=T\left(  \left\Vert \left(  \alpha_{+,0} - \bar \alpha_+,p_{0} - \bar p%
,u_{0}\right)  \right\Vert _{H^{s}},K\right)  $ independent of $\mu$ such that the \eqref{DBN} system with initial data $\left(  \alpha_{+,0},\rho_{\pm,0}%
,u_{0}\right)  $ where
\[
\left\{
\begin{array}
[c]{c}%
\rho_{+,0}=p_{+}^{-1}\left(  p_{0}+\alpha_{-,0}\delta p_{0}\right)  ,\\
\rho_{-,0}=p_{-}^{-1}\left(  p_{0}-\alpha_{+,0}\delta p_{0}\right)  ,
\end{array}
\right.  \text{ }%
\]
admits a unique solution $\left(  \alpha_{+}^{\mu},\rho_{\pm}^{\mu},u^{\mu
}\right)  \in C([0,T];H^{s}(\mathbb{R}))\cap C^{1}((0,T);H^{s-1}(\mathbb{R}%
))$. 
%Moreover, such that%
%\[
%\left\Vert \left(  \alpha_{+}^{\mu},\rho_{\pm}^{\mu},u^{\mu}\right)
%\right\Vert _{H^{s}}...
%\]

\end{theorem}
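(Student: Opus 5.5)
We work in the variables $V=(y,\pi,\delta\pi,u)$ of the second symmetrization, which by the Proposition above form a $C^\infty$-diffeomorphic change of unknowns on $\mathcal{U}\times(-\eta,\eta)$, where $\mathcal{U}$ is a bounded convex neighbourhood of $K$ with $\overline{\mathcal{U}}\subset\mathcal{O}$. In these variables \eqref{DBN} takes the form \eqref{form} with $f_{22}$ possibly absent. The diagonal symmetrizer is
\[
S(V)=\mathrm{diag}\Big(1,\ \tfrac{1}{\rho c^2-\rho c_w^2+f_{11}\delta\pi},\ \tfrac{1}{\rho c_w^2+f_{21}\delta\pi},\ \rho\Big).
\]
On $\mathcal{O}$ we have $\rho c^2-\rho c_w^2>0$ strictly. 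Indeed, $\rho c^2$ and $\rho c_w^2$ are the arithmetic and harmonic means of $\rho_\pm c_\pm^2$ with weights $\alpha_\pm$, and these differ exactly when $\rho_+c_+^2\neq\rho_-c_-^2$. So for $\|\delta\pi\|_{L^\infty}$ small, with a bound depending only on $K$ and not on $\mu$, $S$ is uniformly positive definite. It then makes the principal part symmetric: the $(\delta\pi,u)$ and $(\pi,u)$ couplings have coefficients $1$ after multiplication.

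The plan is a standard Friedrichs/Kato energy method with a $\mu$-uniform treatment of the stiff terms.

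First, we obtain local existence for fixed $\mu$ from the classical theory of symmetric hyperbolic systems with smooth source. This needs $s>3/2$ and data in $\bar V+H^s$, where we translate the data $(\alpha_{+,0},p_0,\delta p_0,u_0)$ through $\varphi$ and use composition estimates in $H^s$. Uniqueness is also immediate at this stage. The real content is the lifespan $T$ independent of $\mu$.

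Second, we derive energy estimates. Apply $\Lambda^s$ to \eqref{form}, take the $L^2$ inner product with $S(V)\Lambda^sV$, and handle the commutators via Kato–Ponce. The key observation concerns the $\delta\pi$ equation. There the zero-order term $\big(\tfrac{\rho_+c_+^2}{\alpha_+}+\tfrac{\rho_-c_-^2}{\alpha_-}+f_{12}\delta\pi\big)\tfrac{\alpha_+\alpha_-}{\mu}\delta\pi$ is, after multiplication by the symmetrizer weight, a positive damping term. It yields a dissipation $\tfrac{c_0}{\mu}\|\delta\pi\|_{H^s}^2$, with commutator errors of size $\tfrac1\mu\|V\|_{H^s}\|\delta\pi\|_{L^\infty}\|\delta\pi\|_{H^s}$. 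For the $\pi$ equation, in the second symmetrization there is no $1/\mu$ term at all. If one uses the first symmetrization, the term $f_{22}(\delta\pi)^2/\mu$ is bounded in $H^s$ by $\tfrac1\mu\|\delta\pi\|_{L^\infty}\|\delta\pi\|_{H^s}\|V\|_{H^s}$ and absorbed similarly. We thus aim for
\[
\frac{d}{dt}E_s+\frac{c_0}{\mu}\|\delta\pi\|_{H^s}^2\le C(\|V\|_{H^s})\,E_s+\frac{C(\|V\|_{H^s})}{\mu}\|\delta\pi\|_{L^\infty}\|\delta\pi\|_{H^s}^2 .
\]
When $\|\delta\pi\|_{L^\infty}\lesssim\|\delta\pi\|_{H^s}$ is small, the last term is absorbed into the dissipation.

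Third, we close with a bootstrap. We must propagate smallness of $\delta\pi$ uniformly in $\mu$. Running a separate estimate for the $\delta\pi$ component alone, the forcing $(\rho c^2-\rho c_w^2)\partial_xu$ is $O(1)$ while the damping is $O(1/\mu)$. This gives $\|\delta\pi\|_{H^{s-1}}\lesssim \|\delta\pi_0\|+\mu\,C$, and, using the full energy, $\|\delta\pi(t)\|_{H^s}\le C(\eta+\sqrt{\mu\,t}\,)$-type control. The bootstrap assumption is that $\|V-\bar V\|_{H^s}\le 2M$, the state stays in $\mathcal{U}$, and $\|\delta\pi\|_{L^\infty}\le 2\eta'$. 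These hypotheses are then recovered on $[0,T]$ with $T=T(M,K)$ by Grönwall and Sobolev embedding, and the local solution is continued to $T$.

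We expect the main obstacle to be exactly this uniform-in-$\mu$ smallness of $\delta\pi$ in $L^\infty$. The source $\partial_xu$ is only in $H^{s-1}$, so the pointwise relaxation bound on $\delta\pi$ must come from the weighted $H^s$ energy and the dissipation rather than from an ODE argument. The first approach is to estimate $\delta\pi$ in $H^{s-1}\hookrightarrow L^\infty$, which is valid since $s-1>1/2$, using the damping against the $H^{s-1}$ norm of $\partial_xu$. This gives $\|\delta\pi(t)\|_{H^{s-1}}\le e^{-c t/\mu}\eta+C\mu M$. If that fails for small $\mu$ uniformly in time near $0$, the fallback is to choose $\eta$ small relative to $K$ and note that the bound $\eta+C\mu M$ stays below the threshold for $\mu\le\mu_0(M,K)$. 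For $\mu\ge\mu_0$, the stiffness is harmless and the classical lifespan depends only on $\mu_0$.

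Finally, we translate back to $(\alpha_\pm,\rho_\pm,u)$ via the Proposition and composition estimates, obtaining the regularity $C([0,T];H^s)\cap C^1([0,T];H^{s-1})$.
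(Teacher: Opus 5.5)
Your architecture is the one the paper intends: the second symmetrization, a diagonal symmetrizer made positive by $\rho c^2-\rho c_w^2=\alpha_+\alpha_-(\rho_+c_+^2-\rho_-c_-^2)^2/(\alpha_+\rho_-c_-^2+\alpha_-\rho_+c_+^2)>0$, then $H^s$ estimates using the $1/\mu$ damping of $\delta\pi$. The paper itself omits these estimates as standard. However, the step meant to make the estimates uniform in $\mu$ fails as written.

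\textbf{The commutator term.} Write the stiff term as $\mu^{-1}G(V)\delta\pi$ with $G=\alpha_+\alpha_-\bigl(\rho_+c_+^2/\alpha_++\rho_-c_-^2/\alpha_-+f_{12}\delta\pi\bigr)$. Its commutator with $\Lambda^s$ contributes
\[
\frac1\mu\Bigl(\|\partial_xG\|_{L^\infty}\|\delta\pi\|_{H^{s-1}}+\|(y-\bar y,\pi-\bar p)\|_{H^s}\|\delta\pi\|_{L^\infty}\Bigr)\|\delta\pi\|_{H^s}.
\]
This is \emph{linear} in $\|\delta\pi\|_{H^s}$, with a coefficient of size $M$ that is not small.
\begin{itemize}
\item Your displayed inequality, where this appears as $\mu^{-1}\|\delta\pi\|_{L^\infty}\|\delta\pi\|_{H^s}^2$, does not follow from the commutator bound you state just before it.
\item After Young's inequality you are left with $\mu^{-1}M^2\bigl(\|\delta\pi\|_{L^\infty}^2+\|\delta\pi\|_{H^{s-1}}^2\bigr)$. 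Your bootstrap hypothesis $\|\delta\pi\|_{L^\infty}\le 2\eta'$ only bounds its time integral by $M^2\eta'^2T/\mu$, which destroys uniformity.
\item $\|\delta\pi\|_{H^s}$ is not small in general: the forcing $a_1\partial_xu$, with $a_1=\rho c^2-\rho c_w^2+f_{11}\delta\pi$, produces an $O(\|u\|_{H^s})$ response at frequencies $|\xi|\sim 1/\mu$. So Sobolev embedding from $H^s$ cannot recover smallness of $\delta\pi$.
\item Your remark on the first symmetrization has the same problem. There $f_{22}(\delta\pi)^2/\mu$ is tested against the undamped $\Lambda^s\pi$, so there is nothing to absorb it into.
\end{itemize}

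\textbf{What is missing.} You need a \emph{time-integrated} relaxation bound, of the form $\mu^{-1}\int_0^T\bigl(\|\delta\pi\|_{L^\infty}^2+\|\delta\pi\|_{H^{s-1}}^2\bigr)\,dt\le C(M)(\eta^2+\mu T)$. The bound you aim for in your third step, $\|\delta\pi(t)\|_{H^{s-1}}\lesssim e^{-ct/\mu}\eta+C\mu M$, would give this, but two things go wrong. You use it only as a smallness threshold. And deriving it from an $H^{s-1}$ energy estimate hits the same stiff commutator at level $s-1$: with $\|\delta\pi\|_{L^\infty}\lesssim\|\delta\pi\|_{H^{s-1}}$ you get $CM\mu^{-1}\|\delta\pi\|_{H^{s-1}}^2$ against a damping of only $c_0\mu^{-1}\|\delta\pi\|_{H^{s-1}}^2$.

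\textbf{The fix.} Use the ODE argument you dismissed. Since $s>3/2$, $\partial_xu\in H^{s-1}\hookrightarrow L^\infty$ and $u$ is Lipschitz. Integrating $D_t\delta\pi+\mu^{-1}G\,\delta\pi=-a_1\partial_xu$ along characteristics, with $G\ge g_0>0$, gives
\[
\|\delta\pi(t)\|_{L^\infty}\le e^{-g_0t/\mu}\|\delta\pi_0\|_{L^\infty}+\frac{\mu}{g_0}\sup_{[0,t]}\|a_1\partial_xu\|_{L^\infty}.
\]
This yields both the pointwise smallness of $\delta\pi$ for $\mu\le\mu_0(M)$ (larger $\mu$ are non-stiff, as you note) and the integrability of $\mu^{-1}\|\delta\pi\|_{L^\infty}^2$. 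Feed it as a source into damped $L^2$ and $H^{s-1}$ estimates for $\delta\pi$ alone; this gives the decay of $\|\delta\pi\|_{H^{s-1}}$ needed for the first commutator piece. Only then does the $H^s$ bootstrap close uniformly in $\mu$.

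You should also state why the second symmetrization is the right choice. The $\pi$-weight $\rho^{-1}[\rho^2c_w^2](y,\pi)$ depends only on $(\rho,y)$, whose material derivatives are non-singular. So $\partial_tS$ produces $1/\mu$ terms only on the $\delta\pi$ block. With the first symmetrization it would generically produce $\mu^{-1}\|\delta\pi\|_{L^\infty}\|\pi\|_{H^s}^2$.
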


\begin{remark}
    The non-resonance condition \begin{align}\label{cond:p}
        p_{+}^{-1}%
(s)p_{+}^{\prime}\left(  p_{+}^{-1}(s)\right)  \not =p_{-}^{-1}(s)p_{-}%
^{\prime}\left(  p_{-}^{-1}(s)\right)
\end{align}
appearing in $\mathcal{O}$ is required in our analysis to justify the validity the change of variables \eqref{def_varphi}. In the case of gamma-law pressure $p_\pm(\rho_\pm)=c_\pm\rho_\pm^{\gamma_\pm}$, it is equivalent to $\gamma_+\ne \gamma_-$.
\end{remark}

Next, we show that the solution constructed in Theorem \ref{thm:LWP} converges strongly toward the solution of Kapila model \eqref{Kapila_chap_sym} for ill-prepared initial data.

\begin{theorem}\label{thm:relax} Let $\mu>0$.
    Let $K\subset {\mathcal O}$ a compact convex set and $\bar\alpha_+,\overline{p}\in {\mathcal O}$. Let $V_0^\mu=(\alpha_{+,0}^\mu,p_{+,0}^\mu,\delta p^\mu_0,u_0^\mu)\in (\bar\alpha_+,\overline{p},0,0)+(H^s(\R))^4$ satisfying the conditions of Theorem \ref{thm:LWP} and let $V_0 =(\alpha_{+,0},p_0,u_0)\in (\bar\alpha_+,\bar p_+,0) + (H^s(\R))^3$.
    Let $(\alpha_+^\mu,p_+^\mu,\delta p^\mu, u^\mu)$ (resp. $(\alpha_+,p_+,u)$) be the solution of \eqref{DBN} (resp. \eqref{Kapila_chap_sym}) with initial condition $(\alpha_{+,0}^\mu, p_{+,0}^\mu,\delta p_0^\mu, u_0^\mu)$ (resp. $(\alpha_{+,0},p_0,u_0)$). Then there exists $C>0$ depending only on $\Vert (V_0^\mu, V_0)\Vert_{H^s}$ such that
    \begin{equation*}
        \Vert(y^\mu-y, \pi^\mu-p,u^\mu-u)\Vert_{L^\infty_T(H^{s-1})} \leq C\sqrt{\mu} + C\Vert (\alpha_{0,+}^\mu-\alpha_{0,+}, p_{0,+}^\mu-p_{0,+},\delta p_{0}^\mu,u_0^\mu-u_0)\Vert_{H^{s-1}}
    \end{equation*}
    where $p$ is given by \eqref{Kapila_sym} and $\pi^\mu$ by \eqref{relatie}.
\end{theorem}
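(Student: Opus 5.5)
Since the solution of \eqref{DBN} exists on a $\mu$-independent interval $[0,T]$ with uniform $H^s$ bounds (Theorem \ref{thm:LWP}), I would compare it with the Kapila solution in the symmetrized variables. On the \eqref{DBN} side I use the second symmetrization $V^\mu=(y^\mu,\pi^\mu,\delta\pi^\mu,u^\mu)$ in the form \eqref{form} with $f_{22}=0$. On the Kapila side I use $(y,p,u)$ solving \eqref{Kapila_sym}, where $\rho c_w^2=\frac1\rho[\rho^2c_w^2](y,p)$ by \eqref{functii_ciudate}. The key observation is that the $\pi^\mu$ equation,
\[
D_t\pi^\mu+\tfrac{1}{\rho^\mu}[\rho^2c_w^2](y^\mu,\pi^\mu)\partial_x u^\mu=0,
\]
has exactly the structure of the Kapila pressure equation. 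Moreover, $\rho^\mu$ is a function of $(y^\mu,\pi^\mu)$ via \eqref{relatie}, in the same way as $\rho=\rho(y,p)$ for Kapila. Hence $(y^\mu,\pi^\mu,u^\mu)$ solves \eqref{Kapila_sym} up to a single source term, namely $-\frac{1}{\rho^\mu}\partial_x\delta\pi^\mu$ in the velocity equation.

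The first step is a uniform dissipation bound on $\delta\pi^\mu$. I take the $H^{s-1}$ (or $H^s$) energy estimate for \eqref{form} with the diagonal symmetrizer. The damping term $\big(\frac{\rho_+c_+^2}{\alpha_+}+\frac{\rho_-c_-^2}{\alpha_-}+f_{12}\delta\pi\big)\frac{\alpha_+\alpha_-\delta\pi}{\mu}$ is coercive for small $\delta\pi$, which is propagated from $\eta$. This should yield
\[
\sup_t\|V^\mu\|_{H^s}^2+\frac1\mu\int_0^T\|\delta\pi^\mu\|_{H^s}^2\,dt\le C,
\]
so that $\|\delta\pi^\mu\|_{L^2_T(H^s)}\le C\sqrt\mu$. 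I expect this to be essentially a by-product of the proof of Theorem \ref{thm:LWP}. One has to check that commutators with the $1/\mu$ term do not cost powers of $\mu^{-1}$. Because the coefficient depends on the unknowns, the commutator is of the form $\frac1\mu\,\partial(\text{coef})\,\delta\pi$, and it can be absorbed by Young's inequality against the dissipation, using the uniform $L^\infty$ bound on derivatives of the coefficients.

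The second step is a stability estimate for the difference $W=(y^\mu-y,\pi^\mu-p,u^\mu-u)$ in $H^{s-1}$. Subtracting the two symmetric hyperbolic systems gives
\[
\partial_tW+A(U^\mu)\partial_xW=-(A(U^\mu)-A(U))\partial_xU-\Big(0,0,\tfrac{1}{\rho^\mu}\partial_x\delta\pi^\mu\Big).
\]
The first term is bounded by $C\|W\|_{H^{s-1}}\|U\|_{H^s}$, using tame product estimates valid since $s-1>1/2$. I symmetrize with the Kapila symmetrizer $\operatorname{diag}(1,1/(\rho^2c_w^2),1)$ evaluated at $U^\mu$, whose time derivative is bounded uniformly thanks to the first step. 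The difficulty is the source term: $\partial_x\delta\pi^\mu$ in $H^{s-1}$ is controlled only in $L^2_T$ by $C\sqrt\mu$, not pointwise in time. That is enough for the energy inequality, since
\[
\frac{d}{dt}\|W\|_{H^{s-1}}^2\le C\|W\|_{H^{s-1}}^2+C\|W\|_{H^{s-1}}\|\delta\pi^\mu\|_{H^s},
\]
and Gronwall with Cauchy--Schwarz in time gives $\|W\|_{L^\infty_T H^{s-1}}\le C(\|W(0)\|_{H^{s-1}}+\sqrt{T\mu})$.

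Finally, I would relate $W(0)$ to the data. Here $y_0^\mu-y_0$ is Lipschitz in $(\alpha^\mu_{+,0}-\alpha_{+,0},p^\mu_{0}-p_{0},\delta p_0^\mu)$. Likewise, $\pi_0^\mu-p_0=(p^\mu_0-p_0)-\delta\pi_0^\mu$, and by \eqref{link_deltapeff_deltap} $\delta\pi_0^\mu$ is a smooth multiple of $\delta p_0^\mu$. These compositions are estimated in $H^{s-1}$ using the uniform $H^s$ bounds and the compactness of $K$. This gives the claimed bound, with $C$ depending on $T$, $K$ and the $H^s$ norms.

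I expect the main obstacle to be the first step, namely obtaining the $1/\mu$ dissipation uniformly in $H^s$ despite the quasilinear commutators. The design of the variables, and in particular $f_{22}=0$, is exactly what prevents $\mu^{-1}$ source terms from entering the $\pi$ equation.
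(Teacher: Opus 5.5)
Your proposal is correct and follows the route the paper indicates. With the second symmetrization ($f_{22}=0$), $(y^\mu,\pi^\mu,u^\mu)$ solves \eqref{Kapila_sym} exactly, except for the source $-\frac{1}{\rho^\mu}\partial_x\delta\pi^\mu$. That source is $O(\sqrt{\mu})$ in $L^2_T(H^{s-1})$ by the uniform $\frac{1}{\mu}$-dissipation, and an $H^{s-1}$ stability estimate, whose symmetrizer depends only on the non-stiff variables $(y,\pi)$, gives the stated rate. The paper omits these technical steps as standard, so your sketch is essentially a fleshed-out version of its intended argument.
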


The main difficulty, addressed in Section \ref{sec:sym}, in proving Theorem \ref{thm:LWP} and Theorem \ref{thm:relax} lies in justifying that the \eqref{DBN} model can be rewritten in the general form \eqref{eq:sym1}. Note that, in order to derive a convergence rate in an ill-prepared setting as in Theorem~\ref{thm:relax}, it is crucial to have a symmetrization of the form \eqref{form} with $f_{22}=0$. We refer to \cite{GiovangigliYong2015} for a discussion on a symmetrization procedure, based on entropy variables, yielding \textit{normal forms} such as \eqref{eq:sym1} for other models.

The technical steps required to ensure existence and uniform bounds in Sobolev spaces are standard (see \cite{GiovangigliYong2018,BurteaCrin-BaratTan2023}) and are therefore omitted.

\subsection{Non-uniform-in-$\mu$ global-in-time well-posedness result}

Next, we wish to investigate the large time behavior of the system.
Define the mass-Lagrangian coordinates:
\[
\left\{
\begin{array}
[c]{l}%
X:[0,\infty)\times\mathbb{R\rightarrow R},\\
\dot{X}\left(  t,x\right)  =u\left(  t,X\left(  t,x\right)  \right)  ,\text{
}X\left(  0,x\right)  =x
\end{array}
\right.
\]
and
\[
Z:\mathbb{R\rightarrow R},\text{ }Z^{-1}\left(  x\right)  =\int_{0}^{x}%
\rho_{0}\left(  y\right)  \mathrm{d}y.
\]
System \eqref{DBN} for
\[
\left(  \alpha^\ell_{\pm},{\rho}^\ell_{\pm},u^\ell\right)  \left(
t,x\right)  =\left(  \alpha_{\pm},\rho_{\pm},u\right)  \left(
t,X(t,Z(x))\right)
\]
reads%
\begin{equation}
\left\{
\begin{array}
[c]{l}%
\partial_{t}\alpha^\ell_{+}=\dfrac{\alpha^\ell_{+}\alpha^\ell_{-}}%
{\mu}(p^\ell_{+}-p^\ell_{-}),\\
\partial_{t}({\alpha}^\ell_{\pm}{\rho}^\ell_{\pm})+{\alpha}^\ell_{\pm
}{\rho}^\ell_{\pm}{\rho}^\ell\partial_{x}u^\ell=0,\\
\partial_{t}u^\ell+\partial_{x}({\alpha}^\ell_{+}{p}^\ell_{+}%
+{\alpha}^\ell_{-}{p}^\ell_{-})=0.
\end{array}
\right.  \label{mass_lagrangian_BN}%
\end{equation}
In these coordinates, we also have that%
\begin{align}\label{eq:yLag}
\partial_{t}y^\ell_{+}=0.
\end{align}
% In the following, we drop the tilde-notation. 

% We recall that that $\delta p$ and $\delta\pi$ are linked by relation $\left(
% \text{see which one}\right)  $.
Our global-in-time well-posedness result reads as follows.
\begin{theorem} \label{thm:GWP}
Let $\mu>0$. For $\bar\alpha_\pm,\bar\rho_\pm>0$, there exists a constant $\eta>0$ depending on $\mu$ such that for $s>3/2$, for any $\alpha_{\pm,0}, \rho_{\pm,0},u_0\in (\bar\alpha_+,\overline{\rho}_\pm,0)+(H^s(\R))^3$, if
   \[
\left\Vert (\alpha_{\pm,0}-\bar{\alpha}_{\pm}, \rho_{\pm,0}-\bar{\rho}_{\pm}, u_0)\right\Vert _{H^s}\leq \eta,
\]
    then \eqref{mass_lagrangian_BN} admits a unique global-in-time solution which satisfies for all $T>0$ : 
    $$Y(T)\leq 2\eta,$$
    where
\begin{align}\label{Xcontrol}
Y(T)&=\|(\alpha^\ell_{\pm}-\bar\alpha_\pm,\rho^\ell_{\pm}-\bar\rho_\pm, u^\ell)\|_{L^\infty_T(H^s)}+\|\delta \pi^\ell\|_{L^2_T(H^s)}+\|(\partial_x u^\ell,\partial_x{\pi}^\ell)\|_{L^2_T(H^{s-1})}.
\end{align}
\end{theorem}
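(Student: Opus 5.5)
The proof is a standard continuity (bootstrap) argument. The a priori estimates are carried out on the symmetrized system \eqref{form} written in the mass-Lagrangian coordinates \eqref{mass_lagrangian_BN}, with the unknowns $V=(y^\ell,\pi^\ell,\delta\pi^\ell,u^\ell)$ given by the second symmetrization \eqref{relatie}. Since $\mu$ is fixed, the change of variables \eqref{change_of_var} is a diffeomorphism near the constant state, provided the non-resonance condition holds there (we assume this, as in Theorem \ref{thm:LWP}). Hence $\|V-\bar V\|_{H^s}$ is equivalent to the norm of the original unknowns appearing in $Y(T)$. Local existence is available for fixed $\mu$, e.g. by Theorem \ref{thm:LWP} or by standard symmetric hyperbolic theory. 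It therefore suffices to prove the following: if $Y(T)\le 2\eta$ on the existence interval, then in fact $Y(T)\le C\eta + CY(T)^2$, with $C$ depending on $\mu$. For $\eta$ small this gives $Y(T)\le \tfrac32\eta$, and continuation yields global existence.

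In Lagrangian mass coordinates $D_t=\partial_t$. Also $\partial_t y^\ell=0$ by \eqref{eq:yLag}, so $y^\ell$ is bounded in $H^s$ by its initial value. For the remaining unknowns $(\pi,\delta\pi,u)$ the system reads $\partial_t\pi+a\partial_x u=\mathcal{Q}$, $\partial_t\delta\pi+b\partial_xu+\frac{\kappa}{\mu}\delta\pi=\mathcal{Q}$, $\partial_tu+\partial_x\pi+\partial_x\delta\pi=0$. Here $a=\rho c_w^2\rho>0$, $b=\rho(\rho c^2-\rho c_w^2)$ (possibly up to a $\rho$ factor from the Lagrangian change), $\kappa=\alpha_+\alpha_-(\rho_+c_+^2/\alpha_++\rho_-c_-^2/\alpha_-)>0$, and $\mathcal{Q}$ collects terms such as $\delta\pi\,\partial_xu$ and $(\delta\pi)^2/\mu$ (note $f_{22}=0$ in the second symmetrization; this is not essential here). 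Since $\rho c^2\ge\rho c_w^2$, with strict inequality exactly under the non-resonance condition, the diagonal symmetrizer $\mathrm{diag}(1/a,1/b,1)$ is positive. Applying $\partial_x^k$, $k\le s$, multiplying by the symmetrizer, and using commutator estimates (Kato--Ponce) gives
\begin{equation*}
\frac{d}{dt}\mathcal{E}_s+\frac{c}{\mu}\|\delta\pi\|_{H^s}^2\lesssim \|\partial_x(\pi,\delta\pi,u)\|_{L^\infty}\,\mathcal{E}_s+\frac{1}{\mu}\|\delta\pi\|_{H^s}^2\|V-\bar V\|_{H^s}.
\end{equation*}
The last term is absorbed for small $Y$. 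This controls $\|\delta\pi\|_{L^2_T(H^s)}$, but the $L^\infty$ factor must be integrable in time, which requires the dissipation of $\partial_x u$ and $\partial_x\pi$.

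That dissipation is the main obstacle. The $(\pi,u)$ block is undamped, and damping only enters through the coupling with $\delta\pi$. I would use a Kawashima--Shizuta type hypocoercivity, or more naturally an effective-velocity argument. The key observation is that differentiating $\delta\pi$'s equation is unnecessary: adding the $\partial_x\delta\pi$ term to the momentum equation produces the cross term $\int \partial_x^{k}\delta\pi\,\partial_x^{k}\partial_x u$, which is controlled using the $\delta\pi$ equation itself. Concretely, I would add the correctors $\varepsilon_1\int\partial_x^{k-1}u\,\partial_x^{k}\pi$ for $1\le k\le s$ (the standard correction for wave-type systems) and $\varepsilon_2\int \partial_x^{k}\delta\pi\,\partial_x^{k-1}\partial_x u$. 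The first corrector gives $-\|\partial_x^k\pi\|^2$ plus a time derivative and plus $\|\partial_x^{k}\delta\pi\|\,\|\partial_x^k\pi\|$, which is absorbed by the $\delta\pi$ dissipation. The second, using $b>0$, gives $-b\|\partial_x^k u\|^2$ plus terms controlled by $\frac1\mu\|\delta\pi\|_{H^s}\|\partial_xu\|_{H^{s-1}}$ and $\|\partial_x u\|\,\|\partial_x\pi\|$. The latter must be handled by choosing $\varepsilon_2\ll\varepsilon_1$ or by applying the hierarchy in the opposite order; I expect fine-tuning of these constants, with $\mu$ entering, to be the delicate part. This yields a Lyapunov functional $\mathcal{L}_s\simeq\|V-\bar V\|_{H^s}^2$ satisfying
\begin{equation*}
\frac{d}{dt}\mathcal{L}_s+c\bigl(\|\delta\pi\|_{H^s}^2+\|\partial_x(u,\pi)\|_{H^{s-1}}^2\bigr)\lesssim Y\cdot(\text{dissipation}).
\end{equation*}

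To close, note that $\|\partial_x(\pi,\delta\pi,u)\|_{L^\infty}\lesssim\|\partial_x(\cdot)\|_{H^{s-1}}$ because $s>3/2$. The cubic terms are bounded by $\|V-\bar V\|_{L^\infty_T H^s}$ times the time-integrated dissipation. Integrating in time gives $Y(T)^2\lesssim \eta^2+Y(T)^3$, which closes the bootstrap. Finally, the Lagrangian variables $\alpha^\ell_\pm,\rho^\ell_\pm$ are recovered from $V$ through the inverse of the change of variables, and uniqueness follows from an $L^2$ stability estimate on differences.
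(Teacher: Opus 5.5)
\textbf{The estimate does not close at the zeroth-order (undifferentiated) level.} Your energy inequality carries the error $\|\partial_x(\pi,\delta\pi,u)\|_{L^\infty}\,\mathcal{E}_s$. But $\mathcal{E}_s$ contains $\|(\widetilde\pi,u)\|_{L^2}^2$, which is never dissipated; only $\delta\pi$, $\partial_x\widetilde\pi$ and $\partial_x u$ are. The dissipation gives $\|\partial_x u\|_{L^\infty}\in L^2_T$, not $L^1_T$. So $\int_0^T\|\partial_x u\|_{L^\infty}\mathcal{E}_s\,dt$ is only bounded by $\sqrt{T}\,Y(T)^3$, and your claim that every cubic term is bounded by $\|V-\bar V\|_{L^\infty_TH^s}$ times the time-integrated dissipation fails for these terms. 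They genuinely occur at $k=0$. With a state-dependent symmetrizer you get $\frac12\int\partial_t(1/a)\,\widetilde\pi^2$ with $\partial_t a\sim\partial_x u$. With a frozen symmetrizer you get terms like $\int\widetilde y\,\widetilde\pi\,\partial_x u$ from the $y$-dependence of $a,b$, and $\widetilde y$ does not decay at all. This missing $L^1_T(W^{1,\infty})$ control is exactly the one-dimensional obstruction the paper emphasizes. The paper avoids it by never using the quasilinear symmetrizer at the $L^2$ level. There it uses the exact energy identity \eqref{energy}: the relative mixture energy, dissipated by $\int\frac{\alpha_+\alpha_-}{\mu\rho}(p_+-p_-)^2$ with no cubic remainder. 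The resulting $L^2$ bounds on $(\widetilde\rho_\pm,u)$ are then transferred pointwise to $(\widetilde\pi,\delta\pi,u)$. Symmetrizer and commutator estimates are applied only after $\Lambda^s$, i.e. in $\dot H^s$. There every nonlinear term carries at least two dissipated factors, and $\widetilde y$ enters only as a bounded coefficient whose smallness is used.

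\textbf{Your hypocoercive bookkeeping also needs repair.} As written, $\int\partial_x^k\delta\pi\,\partial_x^{k-1}\partial_x u=\int\partial_x^k\delta\pi\,\partial_x^k u$ gives no $u$-dissipation, because its principal part $-b\int\partial_x^{k+1}u\,\partial_x^k u$ is a total derivative. It also loses a derivative through $\partial_x^{k+1}\pi$ at $k=s$. The right corrector is $\int\partial_x^{k-1}\delta\pi\,\partial_x^k u$, which is the paper's $\int\Lambda^{s-1}\delta\pi\,\Lambda^{s-1}\partial_x u$. Next, the corrector $\int\partial_x^{k-1}u\,\partial_x^k\widetilde\pi$ produces an error $+a\|\partial_x^k u\|^2$, which you omitted. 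This error can only be absorbed by the $u$-dissipation of the $\delta\pi$--$u$ corrector, and that corrector in turn produces the cross term $\int\partial_x^k\delta\pi\,\partial_x^k\widetilde\pi$, not a term of the form $\|\partial_x u\|\,\|\partial_x\pi\|$. One therefore needs $\varepsilon_2^2\ll\varepsilon_1\ll\varepsilon_2$; the paper takes $\varepsilon_1=\varepsilon^{3/2}$ and $\varepsilon_2=\varepsilon$. Your first suggestion, $\varepsilon_2\ll\varepsilon_1$, goes the wrong way and would not close.
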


\begin{remark} \hfill
    \begin{itemize}
     \item Theorem \ref{thm:GWP} highlights the hypocoercive structure of the model \eqref{mass_lagrangian_BN}: although dissipative effects appears explicitly only in the pressure difference, the hyperbolic coupling within the system allows this dissipation to propagate, yielding decay for both the effective pressure $\pi$ and the velocity.
        \item While Theorems \ref{thm:LWP} and \ref{thm:relax} can be readily extended to multi-dimensional settings (upcoming work), Theorem \ref{thm:GWP} is restricted to the one-dimensional case. This limitation stems from the fact that the linearization of the sub-system $\eqref{eq:sym1}_{1-3}$ does not satisfy the Shizuta–Kawashima (SK) condition \cite{Shizuta1985}  in dimensions greater than one. Computations regarding this fact are provided in Appendix \ref{app:noSK}.
      \item Theorem \ref{thm:GWP} provides a global-in-time existence result for the mass--Lagrangian formulation of the \eqref{DBN} model. Although a corresponding result can be derived for the Eulerian formulation, the control on the solution would be weaker: for initial data in \(H^s\), the solution can only be controlled uniformly-in-time in \(H^{s-1}\). This is due to the fact that we are not able to recover an \(L^1_T(W^{1,\infty})\)-bound on the velocity, which is required to either justify the mass-Lagrangian change of frame or to control the advection term in the equation of $y$ in the Eulerian frame.
      
      % To remedy this issue, a natural strategy would be to work within a Besov framework in order to recover such time-integrable Lipschitz control, as in \cite{BurteaCrin-BaratTan2023}. However, the space \(B^{-\frac12}_{2,1} \cap B^{\frac 32}_{2,1}\), which is typically suited for this purpose, turns out to be inadequate in dimension one, since the product laws required to close the analysis fail in this setting.
        \item In a setting closely related to the one considered here, Qu and Wang \cite{QuWangNoSk} proved a global existence result for quasilinear hyperbolic systems in which exactly one characteristic family violates the Shizuta--Kawashima (SK) condition, provided that the nonlinear terms satisfy a suitable degeneracy condition with respect to this family. In our framework, the eigen-family failing the (SK) condition corresponds to the variable $y$, and the nonlinearities exhibit a similar degeneracy structure. However, the results of \cite{QuWangNoSk} cannot be directly applied to our system. First, 
        their approach relies on the existence of a strictly convex entropy, an assumption that is not fulfilled in our setting. Secondly, their analysis is restricted to space dimensions $d \geq 2$, where the Lipschitz bound \(L^1_T(W^{1,\infty})\) on $u$ can be recovered by assuming that the initial data is in $L^1$ (something that would not work in a one-dimensional setting, as it would still not yield sufficient decay).
    \end{itemize}
\end{remark}

% \begin{theorem} \label{thm:GWPBN}
%     There exists a constant $\eta>0$ depending on $\mu$ such that if
%     \begin{align}
%         \label{smalldata}
%         X_0=\|(\dfrac{\alpha_{+0}\rho_{+0}}{\rho_0}-\dfrac{\bar\alpha_{+0}\bar\rho_{+0}}{\bar\rho_0},\frac{\rho_{+0} c_{+0}^{2} - \rho_{-0} c_{-0}^{2}}
%      {\dfrac{\rho_{+0} c_{+0}^{2}}{\alpha_{+0}}
%       + \dfrac{\rho_{-0} c_{-0}^{2}}{\alpha_{-0}}}
% \,(P_{+0} - P_{-0}),\left(\frac{\dfrac{\rho_{-0} c_{-0}^{2}}{\alpha_{-0}}}{\dfrac{\rho_{+0} c_{+0}^{2}}{\alpha_{+0}} + \dfrac{\rho_{-0} c_{-0}^{2}}{\alpha_{-0}}}\, P_{+0}
% \;+\;
% \frac{\dfrac{\rho_{+0} c_{+0}^{2}}{\alpha_{+0}}}{\dfrac{\rho_{+0} c_{+0}^{2}}{\alpha_{+0}} + \dfrac{\rho_{-0} c_{-0}^{2}}{\alpha_{-0}}}\, P_{-0}\right)-\bar P_0,u_0)\|_{H^s}\leq \eta,
%     \end{align}
%     then \eqref{DBN} admits a unique global-in-time solution which, for all $t>0$, satisfies
%     $$X(t)\leq X_0,$$
%     where
% \begin{align}\label{Xcontrol}
% X(T)&=\|y\|_{L^\infty_T(H^s)}+\|(\delta \pi,{\pi},u)\|_{L^\infty_T(H^s)}\\&\quad+\|w\|_{L^2_T(L^2)}+\|(\partial_xu,\partial_x{\pi})\|_{L^2_T(L^2)}+\|(\delta \pi,{\pi},u)\|_{L^2_T(\dot{H}^s)}. \nonumber
% \end{align}
% \end{theorem}

% \section{Proof of Theorem \ref{thm:LWP} and Theorem \ref{thm:relax}}

\section{Proof of Theorem \ref{thm:GWP}}\label{sec:gwp}

\subsection{Linearization}
In this section, we drop the $\ell$-notation related to the mass-Lagrangian framework. System \eqref{mass_lagrangian_BN} can also be rewritten with respect to the $\left(  y,\pi,\delta
\pi,u\right)$ variables as
\begin{equation}
\left\{
\begin{array}
[c]{l}%
\partial_{t}y=0,\\
\partial_{t}\delta\pi+\left(  y\rho^{2}c_{+}^{2}+\left(  1-y\right)  \rho
^{2}c_{-}^{2}-\left[  \rho^{2}c_{w}^{2}\right]  \right)  \partial_{x}u+\left(
\rho_{+}^{2}c_{+}^{2}-\rho_{-}^{2}c_{-}^{2}\right)  \frac{\alpha_{+}\alpha
_{-}}{\mu}\delta p=0,\\
\partial_{t}\pi+\left[  \rho^{2}c_{w}^{2}\right]  \left(  y,\pi\right)
\partial_{x}u=0,\\
\partial_{t}u+\partial_{x}p=0.
\end{array}
\right.  \label{mass_lag_symetric}%
\end{equation}
Linearizing \eqref{mass_lag_symetric} around the equilibrium 
\begin{align}
    \label{eqeq}(\bar{y},0,\bar{P},0)
\end{align}
we obtain
\begin{equation}
\left\{
\begin{array}
[c]{l}%
\partial_{t}\widetilde y=0,\\
\partial_t\delta \pi+\bigl(h_1+H_{1}%
(\delta \pi,\widetilde{\pi}, \widetilde y)\bigr)\partial_xu+\bigl(h_2+H_{2}(\delta \pi,\widetilde{\pi}, \widetilde y)\bigr)\delta\pi=0,\\
\partial_{t}{\widetilde{\pi}}+\bigl(h_3+H_{3}%
(\delta \pi,\widetilde{\pi}, \widetilde y)\bigr)\partial_xu=0,\\
\partial_{t}u+\bigl(h_5+H_{5}(\delta \pi,\widetilde{\pi}, \widetilde y)\bigr)\partial_x {\widetilde{\pi}}+\bigl(h_6+H_{6}(\delta \pi,\widetilde{\pi}, \widetilde y)\bigr)\partial_x \delta \pi=0,
\end{array}
\right.
\label{generalform}
\end{equation} 
where the perturbed unknown are defined as follows:
\begin{itemize}
\item the perturbed mass fraction $y$ verifies
\begin{align}
\widetilde{y}:=\dfrac{\alpha_+\rho_+}{\rho}-\dfrac{\bar\alpha_+\bar\rho_+}{\bar\rho}
\end{align}
    \item the perturbed effective pressure $ \widetilde\pi$ reads
    \[
\widetilde{\pi}
:= \pi-\bar{P}
\]
\item For $i\in\overline{1,5}$, the functions $H_i$ satisfy $H_{i}\left(  0,0,0\right)=0$ and the constants $h_i>0$ are positive.
\end{itemize}

% As we consider strong solutions and the velocity will be at least Lipschitz, system \eqref{generalform}is equivalent to its Lagrangian formulation.

\subsection{A priori estimates}
%In the following, we omit the tilde-notation  related to the perturbated unknowns. Consider
\begin{equation}
X^{2}\left(  t\right)  :=\left\Vert \left(  \delta\pi,\widetilde{\pi},u\right)  \left(
t\right)  \right\Vert _{H^{s}(\mathbb{R})}^{2}+\left\Vert \delta\pi\right\Vert
_{L^{2}(\left(  0,t\right)  ;H^{s}(\mathbb{R}))}^{2}+\left\Vert (\widetilde{\pi}
,u)\right\Vert _{L^{2}(\left(  0,t\right)  ;\dot{H}^{1}(\mathbb{R})\cap
\dot{H}^{s}(\mathbb{R}))}^{2}.\label{definitie_X(t)}%
\end{equation}
In the following we assume that
\[
X\left(  0\right)  =\left\Vert \left(  \delta\pi_{0},\widetilde{\pi}_{0},u_{0}\right)
\right\Vert _{H^{s}(\mathbb{R})}\leq\eta
\]
and consider the time $T^{\ast}\in(0,\infty]$ where
\begin{align}
    \label{smallnessU}
X\left(  t\right)  \leq2\eta
\end{align}
Assume also that
\[
\left\Vert \widetilde{y}_{0}\right\Vert _{H^{s}}\leq c
\]
with $c$ to be chosen small. We have
\begin{align}\label{est:y}
  \|\widetilde{y}\|_{L^\infty_T(H^s)} \leq c.
\end{align}
\subsubsection{Energy estimates}
For the unknown $(\widetilde y,\delta \pi,\widetilde\pi,u)$, we have the following lemma.
\begin{lemma}\label{lem:hs} Let $s>3/2$ and $(\delta \pi,\widetilde{\pi},u)$ be a smooth solution of \eqref{generalform} such that \eqref{smallnessU} holds.  For all times $t>0$, $$ \|(\widetilde y,\delta \pi,\widetilde{\pi},u)\|_{H^s}^2+c\|\delta \pi\|_{L^2_T(H^s)}^2 \leq \|(\widetilde y_0,\delta \pi_0,\widetilde{\pi}_0,u_0)\|_{H^s}^2+X^3(t).$$
\end{lemma}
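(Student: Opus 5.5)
We propose a standard symmetrized energy estimate for \eqref{generalform}, applied to $\Lambda^\sigma=(1-\partial_x^2)^{\sigma/2}$ derivatives for $0\le\sigma\le s$. The $y$-equation is trivial: since $\partial_t\widetilde y=0$ in mass-Lagrangian coordinates, $\|\widetilde y(t)\|_{H^s}=\|\widetilde y_0\|_{H^s}$.

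For the remaining unknowns $(\delta\pi,\widetilde\pi,u)$ we use a diagonal symmetrizer. Set
\[
A_0=\mathrm{diag}\Bigl(\frac{h_6+H_6}{h_1+H_1},\ \frac{h_5+H_5}{h_3+H_3},\ 1\Bigr).
\]
Multiplying the $\delta\pi$ and $\widetilde\pi$ equations by these weights makes the first-order part symmetric: the cross terms
\[
(h_6+H_6)\bigl(\partial_x u\,\delta\pi+\partial_x\delta\pi\, u\bigr)
\quad\text{and}\quad
(h_5+H_5)\bigl(\partial_x u\,\widetilde\pi+\partial_x\widetilde\pi\, u\bigr)
\]
combine into $x$-derivatives up to commutators with the coefficient. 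By \eqref{smallnessU}, \eqref{est:y} and the embedding $H^s\subset W^{1,\infty}$ (valid for $s>3/2$), the weights are positive and bounded above and below, uniformly in time.

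The energy estimate then proceeds in two steps.

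\textbf{Apply $\Lambda^\sigma$ and pair.} Apply $\Lambda^\sigma$ to the system and take the $L^2$ inner product with $A_0\Lambda^\sigma(\delta\pi,\widetilde\pi,u)$. The relaxation term $(h_2+H_2)\delta\pi$ produces
\[
\int \frac{h_6+H_6}{h_1+H_1}\,(h_2+H_2)\,|\Lambda^\sigma\delta\pi|^2\ \ge\ c\,\|\Lambda^\sigma\delta\pi\|_{L^2}^2,
\]
which is the damping giving the $c\|\delta\pi\|^2_{L^2_T(H^s)}$ term. Here we need $h_2>0$ and the smallness of $H_2$.

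\textbf{Bound the remainders by $X^3$.} The remainders are of three types.
\begin{enumerate}[label=(\roman*)]
\item $\int \partial_t A_0\,|\Lambda^\sigma U|^2$. Since $\partial_t A_0$ is expressed through $\partial_t(\delta\pi,\widetilde\pi)$, it is of size $|\partial_x u|+|\delta\pi|$. We bound this term by $\|(\partial_x u,\delta\pi)\|_{L^\infty}\|U\|_{H^s}^2$.
\item Terms of the form $\int \partial_x(\text{coeff})\,|\Lambda^\sigma U|^2$ arising from integration by parts in the symmetric part. These are treated in the same way.
\item Commutators $[\Lambda^\sigma,H_i]\partial_x U$ and $[\Lambda^\sigma,H_2]\delta\pi$, controlled by the Kato--Ponce estimates. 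For instance
\[
\|[\Lambda^\sigma,H]\partial_x f\|_{L^2}\lesssim \|\partial_x H\|_{L^\infty}\|\partial_x f\|_{H^{\sigma-1}}+\|\partial_x H\|_{H^{\sigma-1}}\|\partial_x f\|_{L^\infty}.
\]
\end{enumerate}

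After time integration, every such term must be bounded by
\[
\int_0^t \|U\|_{H^s}\,\|(\delta\pi,\partial_x\widetilde\pi,\partial_x u)\|_{H^{s-1}}^2\,d\tau\ \le\ X^3(t),
\]
using that $\|(\partial_x\widetilde\pi,\partial_x u)\|_{H^{s-1}}$ is controlled in $L^2_t$ by the dissipative part of $X$ in \eqref{definitie_X(t)}. Summing over $\sigma=0$ and $\sigma=s$, and using the equivalence of the weighted norm with $\|\cdot\|_{H^s}$, gives the stated inequality.

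The main obstacle is that $\widetilde y$ is not dissipated, and the coefficients $H_i$ depend on $\widetilde y$ at order zero. A term such as $\int \partial_x H_5(\widetilde y)\,|\Lambda^s\widetilde\pi|^2$ is only of size $\|\partial_x\widetilde y\|_{L^\infty}\|\widetilde\pi\|_{H^s}^2$. This is quadratic in the non-decaying $\|\widetilde\pi\|_{H^s}$ and not time-integrable via $X$. The way I expect to handle it is to exploit the explicit structure. Every such term is paired with a factor that is already time-integrable: $\partial_x u$ from the symmetric coupling, $\delta\pi$ from relaxation, or $\partial_t A_0$, which contains no $\partial_t\widetilde y$ since $\partial_t\widetilde y=0$. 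Concretely, I would first try writing the bad term as
\[
\int \partial_x(\text{coeff})\,\Lambda^s\widetilde\pi\,\Lambda^s u .
\]
This product involves $u$ and $\widetilde\pi$ at top order, each controlled in $L^2_t(\dot H^s)$. It is therefore bounded by $c\,\|(\widetilde\pi,u)\|_{L^2_t\dot H^s}^2$, which may be absorbed using the smallness $c$ from \eqref{est:y}.
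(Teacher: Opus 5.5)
Your proof has a genuine gap at the zero-order ($L^2$) level, not at top order. Your closing criterion requires every remainder to carry two time-integrable factors, as in $\int_0^t\|U\|_{H^s}\|(\delta\pi,\partial_x\widetilde\pi,\partial_x u)\|_{H^{s-1}}^2\,d\tau$. But $X$ controls $\widetilde\pi$ and $u$ only in $L^2_t(\dot H^1\cap\dot H^s)$, never in $L^2_t(L^2)$. At $\sigma=0$, several of your remainders have at most one dissipated factor:
\begin{itemize}
\item In item (i), the $\widetilde\pi$-weight $a_2=1/(h_3+H_3)$ depends on $\pi$; in the actual system $a_2=1/[\rho^2c_w^2](y,\pi)$. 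So $\partial_tA_0$ produces a term $\int g(y,\pi)\,\partial_x u\,|\widetilde\pi|^2$. Your bound $\|\partial_x u\|_{L^\infty}\|\widetilde\pi\|_{L^2}^2$ is time-integrable only if $\partial_x u\in L^1_t(L^\infty)$, which is exactly the bound the paper says it cannot obtain. Gagliardo--Nirenberg does not help: it only reaches $L^{4/3}_t$.
\item With general coefficients as in \eqref{generalform}, integrating the symmetric coupling by parts at $\sigma=0$ leaves $-\int\partial_xH_5\,\widetilde\pi\,u$. Its $\partial_{\widetilde y}H_5\,\partial_x\widetilde y$ part contains no dissipated factor at all.
\end{itemize}
Your remedy, absorbing into $c\|(\widetilde\pi,u)\|_{L^2_t\dot H^s}^2$, is void at $\sigma=0$, since no derivative falls on $\widetilde\pi$ or $u$. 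Even at $\sigma=s$ it works only for the homogeneous $\Lambda^s$, not for the Bessel potential $(1-\partial_x^2)^{s/2}$ you chose. With that operator, the pairings and Kato--Ponce commutators, e.g.\ $\|\partial_x\widetilde y\|_{L^\infty}\|\partial_x u\|_{H^{s-1}}\|\widetilde\pi\|_{H^s}$, involve the undissipated low-frequency part of $\widetilde\pi$. Note also that being paired with \emph{one} time-integrable factor is not enough: an $L^2_t$ factor times two $L^\infty_t$ factors is not $L^1_t$.

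The missing idea is the paper's. For the $L^2$ part, do not linearize and symmetrize; use instead the exact energy identity \eqref{energy} of \eqref{mass_lagrangian_BN}. It is a relative entropy around the constant densities $\bar\rho_\pm$, which, with $u=0$ and $p_+=p_-=\bar P$, remain a steady state for any static $y_0(x)$. This identity has no cubic remainder and no $\partial_x\widetilde y$ term. It yields $\|(\widetilde\rho_\pm,u)\|_{L^2}^2+\int_0^t\|\delta p\|_{L^2}^2$, from which $(\widetilde\pi,\delta\pi)$ are recovered. Symmetrization and commutator estimates are then used only with the homogeneous $\Lambda^s$. There every remainder carries two dissipated factors, or $c$ times two, such as $\|\partial_x u\|_{L^\infty}\|\widetilde\pi\|_{\dot H^s}^2$ or $\|\partial_x\widetilde y\|_{L^\infty}\|(\widetilde\pi,u)\|_{\dot H^s}^2$.

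Alternatively, for the actual system, where $h_5+H_5=h_6+H_6=1$, you could repair $\sigma=0$ by replacing $\tfrac12a_2|\widetilde\pi|^2$ with the nonlinear potential $\int_{\bar P}^{\pi}(s-\bar P)\,ds/[\rho^2c_w^2](y,s)$. Because $\partial_t y=0$, its time derivative is exactly $-\widetilde\pi\,\partial_x u$, so the $\widetilde\pi$--$u$ coupling cancels exactly. The remaining $\partial_t a_1|\delta\pi|^2$ term is harmless since $\delta\pi\in L^2_t(L^2)$. This amounts to rebuilding the physical energy.
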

\begin{proof}
For the $L^2$ estimates, we shall rely on the natural energy associated to System\eqref{mass_lagrangian_BN}.
System $\left(
\text{\ref{mass_lagrangian_BN}}\right)$ comes with an energy which satisfies
\begin{equation}
\frac{d}{dt}\int_{\mathbb{R}}\left(\frac{\left\vert u\right\vert ^{2}}%
{2}+y_{+,0}\frac{H_{+}\left(  \rho_{+}/\bar{\rho}_{+}\right)  }%
{\rho_{+}}+y_{-,0}\frac{H_{-}\left(  \rho_{-}/\bar{\rho}%
_{-}\right)  }{\rho_{-}}\right)+\int_{\mathbb{R}}\frac{\alpha
_{+}\alpha_{-}}{\mu\rho}(p_{+}-p_{-})^{2}=0\label{energy}%
\end{equation}
where%
\[
H_{\pm}\left(  s/\bar{\rho}_{\pm}\right)  =H_{\pm}\left(  s\right)  -H\left(
\bar{\rho}_{\pm}\right)  -H^{\prime}\left(  \bar{\rho}_{\pm}\right)
(s-\bar{\rho}_{\pm}),
\]
and%
\[
H_{\pm}\left(  s\right)  =s\int_{0}^{s}\frac{p_{\pm}\left(  \tau\right)  }%
{\tau^{2}}\mathrm{d}\tau
\]
 is strictly convex since $p_\pm$ are both increasing functions.
In particular, on $[0,T^{\ast})$, using the energy
of the system $\left(  \text{\ref{energy}}\right)  $ provides a control for
\[
\left\Vert \left(  \widetilde{\rho}_{\pm},u\right)  \right\Vert _{L^{2}(\mathbb{R})}%
^{2}+\int_{0}^{t}\left\Vert \delta p\right\Vert _{L^{2}(\mathbb{R})}%
^{2}\lesssim X^{2}\left(  0\right)  +cX^{2}\left(  t\right)  +X^{3}\left(
t\right)  .
\]
Observing that%
\[
\alpha_{+}=\dfrac{\dfrac{\rho_{-}}{y_{-,0}}}{\dfrac{\rho_{+}}{y_{+,0}}+\dfrac
{\rho_{-}}{y_{-,0}}}%
\]
allows us to recover that%
\[
\left\Vert \left(  \widetilde{\alpha}_{+},\widetilde{\rho}_{\pm},u\right)  \right\Vert _{L^{2}%
(\mathbb{R})}^{2}+\int_{0}^{t}\left\Vert \delta p\right\Vert _{L^{2}%
(\mathbb{R})}^{2}\lesssim X^{2}\left(  0\right)  +X^{2}\left(  t\right)
+X^{3}\left(  t\right)  .
\]
This implies that%
\begin{equation}
\left\Vert \left(  \widetilde{\pi},\delta\pi,u\right)  \right\Vert _{L^{2}(\mathbb{R}%
)}^{2}+\int_{0}^{t}\left\Vert \delta\pi\right\Vert _{L^{2}(\mathbb{R})}%
^{2}\lesssim X^{2}\left(  0\right)  +c^{2}\left(  t\right)  +X^{3}\left(
t\right)  .\label{energy_L2}%
\end{equation}
For the $\dot H^{s}$ estimates, we define
$$
\Lambda^{\sigma}= (-\Delta)^\frac{\sigma}{2}=\mathcal{F}^{-1} \Big( |\xi|^{\sigma} \mathcal{F}(\cdot ) \Big),\quad\sigma\in\mathbb{R}.
$$ Applying $\Lambda^s$ to \eqref{mass_lagrangian_BN}, performing standard energy estimates and exploiting the symmetrizability of the system, together with standard commutator and product estimates, yields the desired result.
\end{proof}

\subsubsection{Hypocoercive estimates in $L^2$}
In Lemma \ref{lem:hs}, we recovered dissipative effects for the unknown $\delta \pi$.
We now aim to recover dissipation for the unknowns ${\pi}$ and $u$. To do so, we resort to hypocoercive estimates.

\begin{lemma}\label{lem:hypo1}  Let $s>3/2$ and $(\delta \pi,\widetilde\pi,u)$ be a smooth solution of \eqref{generalform} such that \eqref{smallnessU} holds. For $t>0$, we have
   \begin{align}\label{L2Hypo}
  \int_\R \delta \pi\partial_x u+\int_0^t\dfrac{h_1}{2}\|\partial_x u\|_{L^2}^2-\int_0^t\dfrac{h_2^2}{2h_1}\|\delta \pi\|_{L^2}^2&\lesssim \|(\delta \pi_0,\partial_xu_0)\|_{L^2}^2\\&\quad +\int_0^t\int_\R h_5\partial_x \widetilde{\pi} { \partial_x\delta \pi}+X^3(t) \nonumber
   \end{align}
   and 
     \begin{align}\label{HsHypo}
      \int_{\R}\Lambda^{s-1}\delta \pi\Lambda^{s-1}\partial_x u+\dfrac{h_1}{2}\int_0^t\|u\|_{\dot{H}^{s}}^2-\dfrac{h_2^2}{2h_1}\int_0^t\|\delta \pi\|_{\dot{H}^{s-1}}^2&\lesssim \|(\delta \pi_0,u_0)\|_{H^s}^2\\&\quad +\int_0^t\int_\R h_5\Lambda^{s-1}\partial_x \widetilde{\pi} { \Lambda^{s-1}\partial_x\delta \pi}\nonumber
      \\&\quad +X^3(t) \nonumber
    \end{align}
\end{lemma}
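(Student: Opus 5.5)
We use the standard Kawashima-type cross term: we differentiate in time the mixed functional $\int_\R \delta\pi\,\partial_x u$ (and its $\Lambda^{s-1}$ analogue) along the flow of \eqref{generalform}. The idea is that the $\delta\pi$-equation contains $h_1\partial_x u$, which yields $-h_1\|\partial_x u\|_{L^2}^2$, while the damping term $h_2\delta\pi$ is controlled by Young's inequality against this good term. Whatever comes from the $u$-equation is either a boundary-type term, the term $h_5\partial_x\widetilde\pi\,\partial_x\delta\pi$ that we keep on the right-hand side, or something absorbed by the dissipation of $\delta\pi$.

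\textbf{$L^2$ level.} We compute
\[
\frac{d}{dt}\int_\R \delta\pi\,\partial_x u=\int_\R \partial_t\delta\pi\,\partial_x u-\int_\R \partial_x\delta\pi\,\partial_t u,
\]
after integrating by parts in the second term. From the second equation of \eqref{generalform}, the first integral equals
\[
-h_1\|\partial_x u\|_{L^2}^2-h_2\int_\R\delta\pi\,\partial_x u-\int_\R\bigl(H_1\partial_xu+H_2\delta\pi\bigr)\partial_xu .
\]
Young's inequality bounds the middle term by $\frac{h_1}{2}\|\partial_xu\|_{L^2}^2+\frac{h_2^2}{2h_1}\|\delta\pi\|_{L^2}^2$. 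From the fourth equation, the second integral equals
\[
\int_\R h_5\,\partial_x\widetilde\pi\,\partial_x\delta\pi+h_6\|\partial_x\delta\pi\|_{L^2}^2+(\text{terms with }H_5,H_6).
\]
The $h_6\|\partial_x\delta\pi\|_{L^2}^2$ contribution is nonnegative after sign bookkeeping or is dominated by $\|\delta\pi\|_{L^2_t(H^s)}^2\le X^2$. Since the energy estimate of Lemma~\ref{lem:hs} already controls that dissipation, and the statement uses $\lesssim$, we move this term to the right-hand side as part of the dissipation bounded through Lemma~\ref{lem:hs}. Integrating in time from $0$ to $t$ then gives \eqref{L2Hypo}. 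The initial term $\int_\R\delta\pi_0\,\partial_xu_0$ is bounded by $\|(\delta\pi_0,\partial_xu_0)\|_{L^2}^2$.

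\textbf{Nonlinear terms.} Each has the form $\int_0^t\int_\R H_i(\delta\pi,\widetilde\pi,\widetilde y)\,A\,B$, with $A,B$ among $\partial_xu$, $\delta\pi$, $\partial_x\widetilde\pi$, $\partial_x\delta\pi$, all of which are $L^2_t(L^2)$-controlled by $X(t)$. Since $H_i(0,0,0)=0$ and $H_i$ is smooth, we have $\|H_i\|_{L^\infty_t(L^\infty)}\lesssim \|(\delta\pi,\widetilde\pi)\|_{L^\infty_t(H^s)}+\|\widetilde y\|_{L^\infty_t(H^s)}\lesssim X(t)+c$ by the embedding $H^s\hookrightarrow L^\infty$ (as $s>1/2$) and \eqref{est:y}. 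Hence these terms are $\lesssim (X+c)X^2$. The $cX^2$ contribution either is absorbed on the left-hand side for $c$ small or is kept in the form $X^3$ once $c\lesssim\eta$. The time derivative of $\delta\pi$ never appears undifferentiated, so no loss of derivatives occurs.

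\textbf{$\dot H^{s-1}$ level.} We apply $\Lambda^{s-1}$ to the $\delta\pi$- and $u$-equations and repeat the computation with $\int_\R\Lambda^{s-1}\delta\pi\,\Lambda^{s-1}\partial_xu$. The linear part is identical and produces $h_1\|u\|_{\dot H^s}^2$ and $\frac{h_2^2}{2h_1}\|\delta\pi\|_{\dot H^{s-1}}^2$.

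The main obstacle is the quasilinear terms $\Lambda^{s-1}(H_6\partial_x\delta\pi)$ and $\Lambda^{s-1}(H_1\partial_xu)$. Paired with $\Lambda^{s-1}\partial_x\delta\pi$ and $\Lambda^{s-1}\partial_xu$ respectively, they involve $s$ derivatives of $\delta\pi$ and $u$, which are exactly at the level of $X$. We write each as $H\,\Lambda^{s-1}\partial_x(\cdot)+[\Lambda^{s-1},H]\partial_x(\cdot)$. The first piece is bounded by $\|H\|_{L^\infty}\|\delta\pi\|_{\dot H^s}\|\cdot\|_{\dot H^s}$, which gives $X^3$ after time integration since both factors are in $L^2_t$. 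The commutator is handled by the Kato--Ponce estimate
\[
\|[\Lambda^{s-1},H]\partial_xf\|_{L^2}\lesssim\|\partial_xH\|_{L^\infty}\|f\|_{\dot H^{s-1}}+\|H\|_{\dot H^{s-1}}\|\partial_xf\|_{L^\infty}.
\]
Since $s>3/2$, we have $\|\partial_xf\|_{L^\infty}\lesssim\|f\|_{\dot H^1\cap\dot H^s}$, which is $L^2_t$-integrable by the definition \eqref{definitie_X(t)}. The $\partial_x\widetilde y$ contributions to $\partial_xH$ are merely bounded, not time-integrable, but they always multiply two $L^2_t$ factors, so they cost only $c\,X^2$, which is acceptable. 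The term $h_5\Lambda^{s-1}\partial_x\widetilde\pi\,\Lambda^{s-1}\partial_x\delta\pi$ is kept on the right-hand side as stated, which yields \eqref{HsHypo}.
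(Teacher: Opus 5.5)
Your route is the paper's: the same cross functional $\int_\R \delta\pi\,\partial_x u$ and its $\Lambda^{s-1}$ analogue, the same identity, Young's inequality on the $h_2$ term, the $h_5$ term kept on the right, and product estimates for the nonlinear terms.

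One point concerns the $h_6$ term. The paper's displayed identity contains $-h_6\|\partial_x\delta\pi\|_{L^2}^2$ on the left, but the stated lemma silently drops it. After time integration it becomes $+h_6\int_0^t\|\partial_x\delta\pi\|_{L^2}^2$ on the right. So it has the unfavourable sign and cannot be discarded as ``nonnegative''. Paying for it with Lemma~\ref{lem:hs}, as you eventually do, is the right repair. It replaces $\|(\delta\pi_0,\partial_x u_0)\|_{L^2}^2$ by the full $H^s$ norm of the data, which is harmless for the final Lyapunov argument.

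Two steps need correcting.

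(i) A contribution $cX^2$ from the $\widetilde y$-dependence of the $H_i$ cannot be turned into $X^3$ via $c\lesssim\eta$. That would need $c\lesssim X(t)$, and $X(t)$ can be much smaller than $\eta$. Such terms must be absorbed into $\frac{h_1}{2}\int_0^t\|\partial_x u\|_{L^2}^2$ when they contain $\partial_x u$ quadratically. Otherwise they must be carried to the Lyapunov step together with the $h_5$ and $h_6$ terms; for instance, $c\int_0^t\|\partial_x\widetilde\pi\|_{L^2}\|\partial_x\delta\pi\|_{L^2}$ cannot be absorbed inside this lemma.

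(ii) At the $\dot H^{s-1}$ level there is no derivative loss to compensate. Every nonlinear term is paired with $\Lambda^{s-1}\partial_x u$ or $\Lambda^{s-1}\partial_x\delta\pi$, whose $L^2$ norms are in $L^2_t$, so no commutator is needed. Your Kato--Ponce splitting in fact fails for $3/2<s<2$. It produces $\|\partial_x\widetilde y\|_{L^\infty}\|u\|_{\dot H^{s-1}}\|u\|_{\dot H^s}$, and since $\partial_x\widetilde y$ does not decay in time, you would need $\|u\|_{\dot H^{s-1}}\in L^2_t$. When $s-1<1$ this norm is not controlled by $\|u\|_{L^2_t(\dot H^1\cap\dot H^s)}$, because of low frequencies. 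Use instead the fractional Leibniz bound
$$\|\Lambda^{s-1}(H\partial_x f)\|_{L^2}\lesssim\|H\|_{L^\infty}\|f\|_{\dot H^s}+\|H\|_{\dot H^{s-1}}\|\partial_x f\|_{L^\infty}.$$
It involves only quantities controlled by $X$ and $c$ for every $s>3/2$, and it is the ``standard product estimate'' the paper has in mind.
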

\begin{proof}
One has
\begin{align*}\dfrac{d}{dt}\int_\R \delta \pi\partial_x u +h_1\|\partial_x u\|_{L^2}^2- h_6\|\partial_x \delta \pi\|_{L^2}^2&=\int_\R H_6(\partial_x\delta \pi)^2 -\int_\R H_1(\partial_x u)^2\\&-\int_\R(h_2+H_2)\delta \pi\partial_xu\\&+\int_\R(h_5+H_5)\partial_x \widetilde{\pi} { \partial_x\delta \pi}.
\end{align*}
Employing Cauchy-Schwarz, Young inequalities and integrating in time give the left-hand side of \eqref{L2Hypo}.
% with 
% \begin{align}\label{c13}
%     c_1=h_1/2, \quad c_2=h^2_2 \quad \text{and} \quad c_3=h_5^2/h_1.
% \end{align}
The nonlinear terms can be treated using standard product estimates in Sobolev spaces. The proof of \eqref{HsHypo} is a direct generalization.
\end{proof}
Next, we show similar dissipative estimates for ${\pi}$.
\begin{lemma}\label{lem:hypo2}
Let $s>3/2$ and $(\delta \pi,\widetilde\pi,u)$ be a smooth solution of \eqref{generalform} such that \eqref{smallnessU} holds. For $t>0$, we have  $$ \int_\R u\partial_x \widetilde{\pi}+\dfrac{h_5}{2}\int_0^t\|\partial_x \widetilde{\pi}\|_{L^2}^2-h_3\int_0^t\|\partial_xu\|_{L^2}^2-\dfrac{h_6^2}{2h_5}\int_0^t\|\partial_x\delta \pi\|_{L^2}^2 \lesssim  \|(u_0,\partial_x\widetilde\pi_0)\|_{L^2}^2+X^3(t) $$
     and, for $s> d/2+1$, 
     \begin{align*}
      \int_{\R}\Lambda^{s-1}u\Lambda^{s-1}\partial_x \widetilde{\pi}+\dfrac{h_5}{2}\int_0^t\|\widetilde{\pi}\|_{\dot{H}^s}-h_3\int_0^t\|u\|_{\dot{H}^{s}}^2-\dfrac{h_6^2}{2h_5}\int_0^t\|\delta \pi\|_{\dot{H}^s}^2&\lesssim \|(u_0,\widetilde\pi_0)\|_{H^s}^2\\&\quad +X^3(t)
    \end{align*}
\end{lemma}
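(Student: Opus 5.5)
The plan mirrors the proof of Lemma \ref{lem:hypo1}, with the roles exchanged: the cross term is now $\int_\R u\,\partial_x\widetilde\pi$ instead of $\int_\R\delta\pi\,\partial_x u$. We differentiate it in time and use the equations for $\widetilde\pi$ and $u$ in \eqref{generalform}:
\begin{align*}
\frac{d}{dt}\int_\R u\,\partial_x\widetilde\pi=\int_\R \partial_t u\,\partial_x\widetilde\pi-\int_\R \partial_x u\,\partial_t\widetilde\pi .
\end{align*}
The first term comes from $\partial_t\partial_x\widetilde\pi$ after an integration by parts. Substituting $\partial_t u=-(h_5+H_5)\partial_x\widetilde\pi-(h_6+H_6)\partial_x\delta\pi$ and $\partial_t\widetilde\pi=-(h_3+H_3)\partial_x u$ gives
\begin{align*}
\frac{d}{dt}\int_\R u\,\partial_x\widetilde\pi+h_5\|\partial_x\widetilde\pi\|_{L^2}^2-h_3\|\partial_x u\|_{L^2}^2=-h_6\int_\R\partial_x\delta\pi\,\partial_x\widetilde\pi+\mathcal N,
\end{align*}
where $\mathcal N=-\int_\R H_5(\partial_x\widetilde\pi)^2-\int_\R H_6\,\partial_x\delta\pi\,\partial_x\widetilde\pi+\int_\R H_3(\partial_x u)^2$.

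The linear cross term is handled by Young's inequality:
\begin{align*}
h_6\Big|\int_\R\partial_x\delta\pi\,\partial_x\widetilde\pi\Big|\le \frac{h_5}{2}\|\partial_x\widetilde\pi\|_{L^2}^2+\frac{h_6^2}{2h_5}\|\partial_x\delta\pi\|_{L^2}^2 .
\end{align*}
The first piece is absorbed into the $h_5$ term on the left. This leaves exactly the coefficients in the statement: $\frac{h_5}{2}$ in front of $\|\partial_x\widetilde\pi\|^2$, $-h_3$ in front of $\|\partial_xu\|^2$, and $-\frac{h_6^2}{2h_5}$ in front of $\|\partial_x\delta\pi\|^2$. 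Integrating on $(0,t)$ puts $\int_\R u_0\,\partial_x\widetilde\pi_0$ on the right, which is bounded by $\|(u_0,\partial_x\widetilde\pi_0)\|_{L^2}^2$. The endpoint value $\int_\R u\,\partial_x\widetilde\pi(t)$ stays on the left, as in the statement.

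For the nonlinear terms, each $H_i$ is smooth and vanishes at $0$, so $\|H_i\|_{L^\infty}\lesssim\|(\delta\pi,\widetilde\pi,\widetilde y)\|_{L^\infty_t(H^s)}$ by the embedding $H^s\hookrightarrow L^\infty$ ($s>1/2$). Together with \eqref{smallnessU} and \eqref{est:y}, this gives
\begin{align*}
\int_0^t|\mathcal N|\lesssim X(t)\int_0^t\|(\partial_x\widetilde\pi,\partial_x u,\partial_x\delta\pi)\|_{L^2}^2\lesssim X^3(t).
\end{align*}
Here $\partial_x u,\partial_x\widetilde\pi\in L^2_t(L^2)$ by the $\dot H^1$ part of $X(t)$ in \eqref{definitie_X(t)}, and $\partial_x\delta\pi\in L^2_t(L^2)$ because $\delta\pi\in L^2_t(H^s)$ with $s\ge1$. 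The contribution of $\widetilde y$ only adds a factor of size $c$, which we absorb by taking $c$ small, or which can be counted with the cubic terms.

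For the $\dot H^{s-1}$ version, we apply $\Lambda^{s-1}$ to the $u$ and $\widetilde\pi$ equations and differentiate $\int_\R\Lambda^{s-1}u\,\Lambda^{s-1}\partial_x\widetilde\pi$. The linear part gives the same identity at the level of $\dot H^s$ norms, and Young's inequality is applied in the same way. The nonlinear part is $\Lambda^{s-1}(H_i\,\partial_x f)$. We split it as $H_i\Lambda^{s-1}\partial_x f$ plus the commutator $[\Lambda^{s-1},H_i]\partial_x f$. The commutator is estimated by Kato--Ponce:
\begin{align*}
\|[\Lambda^{s-1},H_i]\partial_xf\|_{L^2}\lesssim\|\partial_xH_i\|_{L^\infty}\|\partial_x f\|_{\dot H^{s-2}}+\|H_i\|_{\dot H^{s-1}}\|\partial_xf\|_{L^\infty}.
\end{align*}
This is where $s>d/2+1=3/2$ is used, to control $\|\partial_x f\|_{L^\infty}$ and $\|\partial_x H_i\|_{L^\infty}$ through $H^{s}$ norms.

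The main obstacle is making every nonlinear term genuinely cubic, i.e. $X$ in $L^\infty_t$ times two factors that are time-square-integrable. Time integrability is available only for the derivatives of $u$ and $\widetilde\pi$, and for $\delta\pi$ in full. So whenever a term has no derivative on $u$ or $\widetilde\pi$, we must move a derivative onto it through the commutator structure above; we cannot bound $u$ or $\widetilde\pi$ themselves in $L^2_t$. The terms where $\partial_xH_i$ is driven by $\widetilde y$ carry no time decay. I would treat these through the smallness $\|\widetilde y\|_{H^s}\le c$, which produces $cX^2(t)$; this can be absorbed at the end when the four lemmas are combined, and is harmless for the stated inequality.
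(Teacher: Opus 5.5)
Your $L^2$-level computation is what the paper intends. Its proof only says ``similar to Lemma~\ref{lem:hypo1}'': differentiate the cross term, insert the $u$ and $\widetilde\pi$ equations, apply Young, integrate in time, and bound the remainders by product estimates. Your identity, the Young splitting and the coefficients $\frac{h_5}{2}$, $-h_3$, $-\frac{h_6^2}{2h_5}$ are all correct.

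The step that fails as written is the $\dot H^{s-1}$ version, where you treat the nonlinear terms with the Kato--Ponce commutator. Its first piece, $\|\partial_xH_i\|_{L^\infty}\|\partial_xf\|_{\dot H^{s-2}}=\|\partial_xH_i\|_{L^\infty}\|f\|_{\dot H^{s-1}}$, puts only $s-1$ derivatives on $f$. For $f\in\{u,\widetilde\pi\}$ and $3/2<s<2$ we have $s-1<1$. In that range $X(t)$ gives no $L^2_t$ control of $\|f\|_{\dot H^{s-1}}$; it only controls $\dot H^1\cap\dot H^s$ in $L^2_t$. Now pair this with the part of $\partial_xH_i$ generated by $\partial_x\widetilde y$, which is merely bounded in time (for instance $H_3$ depends on $y$ through $[\rho^2c_w^2](y,\pi)$). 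You get a contribution like
\[
c\int_0^t\|u\|_{\dot H^{s-1}}\|u\|_{\dot H^{s}}\,d\tau,
\]
which is not bounded by $cX^2(t)$ uniformly in $t$. So your claim that the $\widetilde y$-driven terms produce $cX^2$ fails for $3/2<s<2$. For $s\ge 2$, interpolation between $\dot H^1$ and $\dot H^s$ rescues it.

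The remedy is not to commute at all. The cross term has no derivative loss: each nonlinear term $\Lambda^{s-1}(H_i\partial_xf)$ is paired with $\Lambda^{s-1}\partial_x\widetilde\pi$ or $\Lambda^{s-1}\partial_xu$, both of which lie in $L^2_tL^2$. Since $s-1>1/2$, the tame product estimate
\[
\|\Lambda^{s-1}(H_i\partial_xf)\|_{L^2}\lesssim\|H_i\|_{L^\infty}\|f\|_{\dot H^{s}}+\|\Lambda^{s-1}H_i\|_{L^2}\|\partial_xf\|_{L^\infty},\qquad \|\partial_xf\|_{L^\infty}\lesssim\|f\|_{\dot H^1}+\|f\|_{\dot H^s},
\]
leaves every term with one factor bounded in $L^\infty_t$ by $X+c$ and two factors bounded in $L^2_t$ by $X$. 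This gives the required $(X+c)X^2$ bound for all $s>3/2$, and it is what the paper means by ``standard product estimates''.

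Your heuristic of moving derivatives onto $u,\widetilde\pi$ ``through the commutator structure'' therefore runs the wrong way here. The commutator removes a derivative from $f$. That is exactly what you need to avoid derivative loss in the $\dot H^s$ energy estimate, but in a hypocoercive cross-term estimate it costs time integrability at low frequencies.
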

\begin{proof}
    The proof is similar to that of Lemma \ref{lem:hypo1}.
\end{proof}

% \subsection{Hypocoercive estimates in $\dot{H}^s$.}

% \begin{lemma}\label{lem:hypo3}
%    Let $(y,U)$ be a smooth solution of \eqref{generalform0} such that \eqref{smallnessU} holds.
%   There exist postive constants $c_4,c_5$ and $c_5$ depending on $h_1,h_,h_5$ and $h_6$ such that 
%     \begin{align}
%         \dfrac{d}{dt}\int_{\R} \Lambda^{s-1}w \Lambda^{s-1}\partial_xu+\|u\|^2_{\dot{H}^s}\lesssim \|(\delta \pi,\partial_xw)\|_{\dot{H}^{s-1}}^2+\|{\pi}\|_{\dot{H}^s}^2+X^3(t)
%     \end{align}
% \end{lemma}
% \begin{lemma}\label{lem:hypo4}
%     We have
%     \begin{align}
%        \dfrac{d}{dt}\int_{\R}\Lambda^{s-1}u\Lambda^{s-1}\partial_x {\pi}+\|{\pi}\|^2_{\dot{H}^s}\lesssim \|u\|_{\dot{H}^{s}}^2+\|w\|_{\dot{H}^s}^2+X^3(t)
%     \end{align}
% \end{lemma}

\subsubsection{Gathering the estimates}
Let $\varepsilon>0$. Consider the Lyapunov functional
\begin{align}\mathcal{L}(t)&=\|(\widetilde{y},\widetilde{\pi},\delta \pi,u)\|_{H^s}^2+\varepsilon\int_\R \delta\pi\partial_x u+\varepsilon^{\frac32}\int_\R u\partial_x \widetilde{\pi}
\\+&\varepsilon \int_\R \Lambda^{s-1}\delta\pi\Lambda^{s-1}\partial_x u+\varepsilon^{\frac32}\int_\R \Lambda^{s-1}u\Lambda^{s-1}\partial_x \widetilde{\pi}.
\end{align}
First, let us remark that for $\varepsilon$ sufficiently small, we have
\begin{align}
    \mathcal{L}(t)\sim \|(\widetilde{y},\widetilde{\pi},\delta \pi,u)\|_{H^s}^2.
\end{align}
Then, using that
\begin{equation*}
    \varepsilon h_5\vert\partial_x\widetilde\pi\vert \vert\partial_x\delta\pi\vert\leq \frac{\varepsilon^{3/2}h_5}{4}\vert\partial_x\widetilde\pi\vert^2 + \varepsilon^{1/2}h_5\vert\partial_x\delta\pi\vert^2
\end{equation*}
to treat the right-hand side of {\eqref{L2Hypo}}, 
together with Lemma \ref{lem:hs}, Lemma \ref{lem:hypo1}, Lemma \ref{lem:hypo2}, for $\varepsilon$ sufficiently small, we obtain
\begin{align}
    X(t)^2\lesssim X^3(t)+X_0^2.
\end{align}
Using a standard bootstrap argument extending our local-in-time result, one concludes the existence of a global-in-time solution satisfying the desired properties. The proof of uniqueness is omitted as it is classical. \qed

\subsection*{Acknowledgments}
This work was partially supported by a CNRS PEPS JCJC grant.
T. Crin-Barat is supported by the project ANR-24-CE40-3260 – Hyperbolic Equations, Approximations $\&$ Dynamics (HEAD).

\appendix

\section{On a higher-dimensional version of Theorem \ref{thm:GWP}}\label{app:noSK}
In this section, we explain why a higher-dimensional analogue of Theorem \ref{thm:GWP} cannot be obtained by directly replicating the methodology developed in Section \ref{sec:gwp}.

For $d\geq 2$, a simplified version of the linear system associated to the linearization of \eqref{eq:sym1} reads
\begin{align}
\left\{
\begin{array}
[c]{l}%
\partial_t \delta \pi + \div u+\delta\pi=0,
\\
\partial_t \pi + \div u=0,
\\
\partial_t u+\nabla \pi + \nabla \delta \pi=0.
\end{array}
\right.
\end{align}
Thanks to the symmetry of the system, standard $H^1$ energy estimates give
\begin{align}
    \dfrac12\dfrac{d}{dt}\|(\delta \pi, \pi,u)\|_{H^1}^2+\|\delta \pi\|_{H^1}^2=0.
\end{align}
To retrieve dissipation for $u$ and $\delta \pi$, we perform the following hypocoercive analysis.
We compute
\begin{align}
    \dfrac{d}{dt}\int_{\R^d}\delta \pi \cdot \div u + \|\div u\|_{L^2}^2-\|\nabla \delta \pi\|_{L^2}^2\leq \int_{\R^d} \delta\pi\cdot \div u+\int_{\R^d} \nabla\delta\pi\cdot \nabla \pi
\end{align}
and 
\begin{align}
    \dfrac{d}{dt}\int_{\R^d}u \nabla \pi + \|\nabla \pi\|_{L^2}^2-\|\div u\|_{L^2}^2\leq \int_{\R^d} -\nabla\delta\pi\cdot \nabla \pi.
\end{align}
Define. $$\mathcal{L}(t)= \dfrac12\dfrac{d}{dt}\|(\delta \pi, \pi,u)\|_{H^1}^2+\varepsilon\int_{\R^d}\delta \pi \cdot \div u+ \varepsilon^{\frac32}\int_{\R^d}u \nabla \pi.$$ 
For $\varepsilon$ small enough we have $\mathcal{L}\sim \|(\delta \pi, \pi,u)\|_{H^1}^2$. Employing Cauchy-Schwarz and Young inequalities yields
\begin{align}
    \dfrac{d}{dt}\mathcal{L}+\|\delta \pi\|_{H^1}^2+ \varepsilon\|\div u\|_{L^2}^2+\varepsilon^{\frac32}\|\nabla \pi\|_{L^2}^2 \lesssim 0.
\end{align}
We observe that we only retrieve dissipation for $\div u$, and not for the full gradient of the velocity. This structural limitation is due to the failure of the Shizuta--Kawashima (SK) condition in dimensions \(d \geq 2\). This failure is in turn related to the scalar nature of the pressure unknowns, in contrast with the vector-valued velocity field \(u\), which prevents the dissipation mechanism to be transferred over all velocity components.

\section{The full system}

Here, we present a symmetrization procedure for the full, non-isentropic, multi-dimensional Baer-Nunziato system in \cite{Kapila2001} which reads
\begin{equation}
\left\{
\begin{array}
[c]{l}%
\partial_{t}\alpha_{\pm}+u\cdot\nabla\alpha_{\pm}=\frac{\alpha_{\pm}\alpha
_{-}}{\mu}\left(  p_{+}-p_{-}-\beta\right),\\
\partial_{t}\left(  \alpha_{\pm}\rho_{\pm}\right)  +\operatorname{div}\left(
\alpha_{\pm}\rho_{\pm}u\right)  =\pm\mathcal{C},\\
\partial_{t}\left(  \rho u\right)  +\operatorname{div}\left(  \rho u\otimes
u\right)  +\nabla\left(  \alpha_{+}p_{+}+\alpha_{-}p_{-}\right)  =0.\\
\alpha_{+}\rho_{+}T_{+}\left(  \partial_{t}\eta_{+}+u\cdot\nabla\eta
_{+}\right)  =\frac{\alpha_{+}\alpha_{-}}{\mu}\left(  p_{+}-p_{-}%
-\beta\right)  ^{2}-\left(  T_{+}-T_{-}\right)  \mathcal{H},\\
\alpha_{-}\rho_{-}T_{-}\left(  \partial_{t}\eta_{-}+u\cdot\nabla\eta
_{-}\right)  =\left(  T_{+}-T_{-}\right)  \mathcal{H-}\left[  \left(
h_{+}-h_{-}\right)  -\frac{1}{\rho_{+}}\left(  p_{+}-p_{-}-\beta\right)
\right]  C,
\end{array}
\right.  \label{BN_complet}%
\end{equation}
where the extra unknowns are
\begin{itemize}
\item the temperatures: $T_{+},T_{-},$

\item the internal energies: $e_{+},e_{-},$

\item the specific entropies: $\eta_{+},\eta_{-},$

\item the enthalpies: $h_{+},h_{-},$

\item the function $\beta$ represents the intragranular stress or
configuration pressure:%
\[
\beta\left(  \alpha_{+},\rho_{+}\right)  =\alpha_{+}\rho_{+}\frac{dB}{d\alpha
}\left(  \alpha_{+}\right)
\]
where $B$ is a strictly increasing and strictly convex function. It is
important both from a physical point of view (\cite{Kapila2001}) and it somehow eases the
mathematical study of related systems (\cite{evje1})

\item the heat-transfer coefficients: $\mathcal{H}$.

\item mass-transfer coefficients: $\mathcal{C}$
\end{itemize}

We observe that the equation of the entropies have a favorable structure regarding the zero-order terms: the singular terms are proportional to $\dfrac{(\delta {\pi})^2}{\mu}$.

System \eqref{BN_complet} can be put under the form%
\[
\left\{
\begin{array}
[c]{l}%
D_{t}y=0,\\
D_{t}\delta {\pi}+\left(  \rho c^{2}-\rho c_{w}^{2}+f_{11}\delta
{\pi}\right)  \operatorname{div}u+\alpha_{+}\alpha_{-}\left(  \frac{\rho
_{+}c_{+}^{2}}{\alpha_{+}}+\frac{\rho_{-}c_{-}^{2}}{\alpha_{-}}+\alpha_{+}%
\rho_{+}B^{\prime\prime}\left(  \alpha_{+}\right)  +f_{12}\delta
{\pi}\right)  \dfrac{\delta {\pi}}{\mu}=0,\\
D_{t}{\pi}+(\rho c_{w}^{2}+f_{21}\delta {\pi})\operatorname{div}u+f_{22}%
\dfrac{\left(  \delta {\pi}\right)  ^{2}}{\mu}=0,\\
D_{t}u+\frac{1}{\rho}\nabla {\pi}+\frac{1}{\rho}\nabla(\delta {\pi})=0,\\
\alpha_{+}\rho_{+}T_{+}D_{t}\eta_{+}=\frac{\alpha_{+}\alpha_{-}}{\mu}\left(
p_{+}-p_{-}-\beta\right)  ^{2}-\left(  T_{+}-T_{-}\right)  \mathcal{H},\\
\alpha_{-}\rho_{-}T_{-}D_{t}\eta_{-}=\left(  T_{+}-T_{-}\right)
\mathcal{H-}\left[  \left(  h_{+}-h_{-}\right)  -\frac{1}{\rho_{+}}\left(
p_{+}-p_{-}-\beta\right)  \right]  C,
\end{array}
\right.
\]
where
\begin{align*}
\delta {\pi} &  =\frac{\rho_{+}c_{+}^{2}-\rho_{-}c_{-}^{2}-\beta}{\frac{\rho
_{+}c_{+}^{2}}{\alpha_{+}}+\frac{\rho_{-}c_{-}^{2}}{\alpha_{-}}+\alpha_{+}%
\rho_{+}B^{\prime\prime}\left(  \alpha_{+}\right)  }\left(  p_{+}-p_{-}%
-\beta\right)  ,\\
\rho c^{2} &  =\alpha_{+}\rho_{+}c_{+}^{2}+\alpha_{-}\rho_{-}c_{-}^{2},\\
\rho c_{w}^{2} &  =\rho c^{2}-\frac{\left(  \rho_{+}c_{+}^{2}-\rho_{-}%
c_{-}^{2}-\beta\right)  ^{2}}{\frac{\rho_{+}c_{+}^{2}}{\alpha_{+}}+\frac
{\rho_{-}c_{-}^{2}}{\alpha_{-}}+\alpha_{+}\rho_{+}B^{\prime\prime}\left(
\alpha_{+}\right)  },
\end{align*}
and%
\begin{align*}
{\pi}&=p-\frac{\rho_{+}c_{+}^{2}-\rho_{-}c_{-}^{2}-\beta}{\frac{\rho_{+}%
c_{+}^{2}}{\alpha_{+}}+\frac{\rho_{-}c_{-}^{2}}{\alpha_{-}}+\alpha_{+}\rho
_{+}B^{\prime\prime}\left(  \alpha_{+}\right)  }(p_{+}-p_{-}-\beta)\\
&=\frac{\frac{\rho_{-}c_{-}^{2}}{\alpha_{-}}+\alpha_{+}^{2}\rho_{+}%
B^{\prime\prime}\left(  \alpha_{+}\right)  +\beta}{\frac{\rho_{+}c_{+}^{2}%
}{\alpha_{+}}+\frac{\rho_{-}c_{-}^{2}}{\alpha_{-}}+\alpha_{+}\rho_{+}%
B^{\prime\prime}\left(  \alpha_{+}\right)  }p_{+}+\frac{\frac{\rho_{+}%
c_{+}^{2}}{\alpha_{+}}+\alpha_{-}\alpha_{+}\rho_{+}B^{\prime\prime}\left(
\alpha_{+}\right)  +\beta}{\frac{\rho_{+}c_{+}^{2}}{\alpha_{+}}+\frac{\rho
_{-}c_{-}^{2}}{\alpha_{-}}+\alpha_{+}\rho_{+}B^{\prime\prime}\left(
\alpha_{+}\right)  }p_{-}\\
&\quad+\frac{\rho_{+}c_{+}^{2}-\rho_{-}c_{-}^{2}-\beta}{\frac{\rho_{+}c_{+}^{2}%
}{\alpha_{+}}+\frac{\rho_{-}c_{-}^{2}}{\alpha_{-}}+\alpha_{+}\rho_{+}%
B^{\prime\prime}\left(  \alpha_{+}\right)  }\beta.
\end{align*}
Under this form, a local-in-time well-posedness result similar to Theorem \ref{thm:LWP} can be established for system
$\left(  \text{\ref{BN_complet}}\right) $.
\appendix
\bibliographystyle{abbrv} 
\bibliography{Reference}
\vfill

\end{document}